\theoremstyle{plain}
\newtheorem{theorem}{Theorem}[section]
\newtheorem{lemma}[theorem]{Lemma}
\newtheorem{proposition}[theorem]{Proposition}
\newtheorem{corollary}[theorem]{Corollary}
\theoremstyle{definition}
\newtheorem{definition}[theorem]{Definition}
\newtheorem{remark}[theorem]{Remark}
\newcommand{\di}{\mathrm{d}}
\newcommand{\Hpi}{H_{\pi}}
\begin{document}

\title{Poisson Structures of near-symplectic Manifolds and their Cohomology}

\author{Panagiotis Batakidis}

\address{Department of Mathematics
\\
Aristotle University of Thessaloniki
\\
 Thessaloniki 54124, Greece.}

\email{batakidis@math.auth.gr}

\author{Ram{\'o}n Vera}

\address{Department of Mathematics, KU Leuven, Celestijnenlaan 200B, Leuven 
B-3001, Belgium}

\email{ramon.vera@kuleuven.be, rvera.math@gmail.com}

\begin{abstract}
We connect Poisson and near-symplectic geometry by showing that there is a singular Poisson structure on a near-symplectic 4-manifold. The Poisson structure $\pi$ is defined on the tubular neighbourhood of the singular locus $Z_{\omega}$ of the 2-form $\omega$, it is of maximal rank 4 and it vanishes on a degeneracy set containing $Z_{\omega}$. We compute its smooth Poisson cohomology, which depends on the modular vector field and it is finite dimensional. We conclude with a discussion on the relation between the Poisson structure $\pi$ and the overtwisted contact structure associated to a near-symplectic 4-manifold.

\end{abstract}

\subjclass [2010]{Primary: 53D17, 57R17, 17B63. Secondary: 16E45, 17B56, 57M50.}
\keywords{near-symplectic forms, Poisson cohomology, harmonic self-dual 2-forms, Poisson algebra, smooth $4$--manifolds, almost regular Poisson structure}

\maketitle

\vspace{-1cm}

\section{Introduction}
It is well known that symplectic and Poisson structures are naturally related. A symplectic form on a smooth manifold determines a regular Poisson structure, whose symplectic leaf is the whole manifold.  Relaxing the non-degeneracy condition of a symplectic form leads to a closed 2-form that is symplectic away from its degeneracy locus, i.e it is singular with respect to non-degeneracy. It is then not automatic that there is an induced Poisson structure as in the symplectic case. In this work we study this problem in relation to a near-sympectic form, a type of such singular symplectic structure. This is a closed 2-form $\omega$ on a smooth $2n$--manifold $M$ that is positively non-degenerate outside a codimension-3 submanifold, where the rank of $\omega$ drops by 4.  If $M$ is 4-dimensional and closed, $Z_{\omega}$ is a collection of circles where $\omega$ vanishes. The idea of looking at near-symplectic forms goes back to Taubes in relationship to $J$--holomorphic curves, Seiberg-Witten, and Gromov invariants \cite{T98, T98SW, T99}. Near-symplectic forms have also been studied under the framework of self-dual harmonic forms vanishing on circles for a generic metric \cite{Ho04, LB97, T99}. The work of Auroux, Donaldson, and Katzarkov showed that a natural object associated to near-symplectic forms is a broken Lefschetz fibration \cite{ADK05}, a generalization of Donaldson's Lefschetz pencil. These 2-forms have been of interest also in  smooth 4-manifold theory \cite{GK, Ge18, L09} and contact topology due to their connection to overtwisted contact structures \cite{Ho04, GK07}.  Here, we take a distinct view by approaching them through Poisson geometry. We prove the existence of Poisson structures on near-symplectic manifolds, and characterize them in terms of their Poisson cohomology.

Poisson cohomology was introduced by Lichnerowicz in 1977 \cite{Lichnerowicz}. It is an important invariant of Poisson geometry, as it reveals features about deformations, normal forms, derivations, and other characteristics of a Poisson structure. In general it is hard to calculate, one of the reasons being that the complex used to define the cohomology spaces is elliptic only at the points where the Poisson bivector is non-degenerate. In many cases it is infinite-dimensional, and it is unknown for many types of Poisson structures. It is well known that if $\mathfrak{g}$ is a semisimple Lie algebra and $\mathfrak{g}^{*}$ its dual  equipped with the correspoinding Lie-Poisson structure, then by results of Lu \cite{Lu90}, Ginzburg and Weinstein \cite{GW92}, the Poisson and Lie algebra cohomologies with polynomial coefficients are related and in fact $H^{k}_{\pi}(\mathfrak{g}^{*}) = H_{\text{Lie}}^{k}(\mathfrak{g}) \otimes \text{Cas}(\mathfrak{g}^{*})$ (see for example \cite[Proposition 7.15]{LGPV13}).  However the linear Poisson structure constructed in this work is neither semi-simple, nor compact. 

Recently, Poisson cohomology has served as a valuable tool to understand certain singular Poisson structures. For example, it was essential in the work of Radko \cite{Radko} in order to classify topologically stable Poisson structures on smooth, compact, oriented, surfaces.  These structures were later generalized under the name of \textsl{log} or \textsl{b-symplectic} structures.  The Poisson cohomology of $b$-symplectic structures was determined in the work of Guillemin, Miranda, and Pires \cite{GMP}, and Marcut and Osorno-Torres \cite{MO14i, MO14ii}, while the Poisson cohomology of broken Lefschetz fibrations is computed in \cite{BV18}.

The main result of this paper is the following.
\begin{theorem}\label{thm:near-symplectic-Poisson}
Let $(M, \omega)$ be a closed, near-symplectic $4$--manifold.  Then there is a singular Poisson structure $\pi$ of maximal rank $4$ on the tubular neighborhood $U_{Z_\omega}$ of $Z_\omega$ such that the vanishing locus of $\pi$ contains $Z_\omega$. The smooth Poisson cohomology of $\pi$ is given in the following list, where $k$ denotes the total number of circles in $Z_\omega$:

\begin{equation*} 
\begin{split}
H^{0}_{\pi} (U_{Z_\omega}, \mathbb{R}) &\cong \mathbb{R}   \cong  \mathrm{span}\langle 1 \rangle, 
\\
H^{1}_{\pi}(U_{Z_\omega}, \mathbb{R}) & \cong \mathbb{R}^{2k}    \, \, \cong  \, \, \bigg[\bigoplus_{r=1}^{k} \mathrm{span} \left \langle  Y_{r}^{\Omega}(\pi) \, , \,   \partial_{r}^{L^1}  \right \rangle\bigg],
\\
H^{2}_{\pi} (U_{Z_\omega}, \mathbb{R})  &\cong \mathbb{R}^k  \cong\bigg[ \bigoplus_{r=1}^{k}  \mathrm{span} \left \langle Y_{r}^{\Omega}(\pi) \wedge   \partial_{r}^{L^1}  \right\rangle\bigg],
\\
H^{3}_{\pi} (U_{Z_\omega}, \mathbb{R}) & =0, 
\nonumber
\\
H^{4}_{\pi} (U_{Z_\omega}, \mathbb{R}) &=0. 
\nonumber
\end{split}
\end{equation*}
The generators $Y_{1}^{\Omega}(\pi), \dots, Y_{k}^{\Omega}(\pi)$ of $\Hpi^{1}(U_{Z_\omega},\mathbb{R})$ correspond to the modular vector field of $\pi$ at each component of the singular locus $Z_{\omega}$, and  $ \partial_{1}^{L^1}, \dots,  \partial_{k}^{L^1}$ are vector fields on the tubular neighbourhood of each component of $Z_{\omega}$. 
\end{theorem}
The proof of the existence part of Theorem \ref{thm:near-symplectic-Poisson} is Proposition \ref{prop:nearsymp}. Section \ref{sec:singular-structures} finishes with a remark regarding Poisson structures in higher dimensional near-symplectic manifolds (Proposition \ref{allM}).  

We then calculate the Poisson cohomology of the structure of Theorem \ref{thm:near-symplectic-Poisson} in Section \ref{near-positive cohomology}. We start by computing Poisson cohomology with formal coefficients in Proposition \ref{prop:near-pos-cohom}. This calculation is split in Lemmata \ref{H0} - \ref{H3} in Section \ref{smooth near-positive}. A key observation comes from the action of Hamiltonian vector fields on polynomial functions with respect to a certain notion of degree. The section finishes with Remark \ref{dq} about deformation quantization of this particular Poisson structure.  We then follow with Poisson cohomology with smooth coefficients in Proposition \ref{smooth np}. 

 In Section \ref{sec:contact} we discuss the relation between Poisson and contact structures on a near-symplectic 4-manifold in connection to Theorem \ref{thm:near-symplectic-Poisson}. It is known that there is an overtwisted contact structure on the boundary of the tubular neighbourhood of the singular locus of a near-symplectic form \cite{Ho04, GK}. We use this result to make some observations regarding the orbits of the Reeb vector field in relation to the modular class and the image of the contact form through the anchor map of the Poisson structure that we construct.  %

The Poisson structure studied here fits in the following degeneracy scheme on 4-manifolds. Let $M$ be a smooth oriented 4-manifold and  $\pi \in \Gamma(\Lambda^2 TM)$ a Poisson bivector.  In terms of distinct degeneracies in the rank of $\pi$, we have that at any point $p\in M$,  $\pi$ can have rank 4, 2, or 0 along symplectic leaves, so one has the following cases:

\begin{itemize}
\item[({\em i})] ${\rm Rank} (\pi_p) = 4$, where $\pi^2(p) \not= 0$,
\vspace{0.5mm}

\item[({\em ii})] ${\rm Rank}(\pi_p) = 2$, where $\pi^2(p) = 0$, but $\pi(p) \not= 0$,
\vspace{0.5mm}

\item[({\em iii})] ${\rm Rank}(\pi_p) = 0$.
\end{itemize}
\quad  Regular Poisson structures are those with constant rank on $M$. On one end of the spectrum we find symplectic manifolds, which determine a regular Poisson bivector satisfying condition ({\em i}) everywhere. On the other end, a trivial Poisson structure corresponds to case ({\em iii}). If a Poisson structure is singular, there can be a combination of the three cases in the list, at different points of the manifold. For instance, {\em log-symplectic} structures are those equipped with a Poisson bivector $\pi$ on an even dimensional manifold $M$ such that $\pi^n$ is transverse to the zero section in $\Lambda^{2n} TM$. In dimension 4, they capture cases ({\em i}) and ({\em ii}); the rank of $\pi$ is maximal except at a codimension-1 submanifold, where $\pi^2$ vanishes transversally. The Poisson structure that we consider here is an example for cases ({\em i}) and ({\em iii}). 

In \cite{BV18} we compute the Poisson cohomology of a broken Lefschetz fibration (bLf) using the associated Poisson structure constructed in \cite{GSV}. That Poisson structure is a combination of cases ({\em ii}) and ({\em iii}) in the previous list. With the Poisson cohomology computed in \cite{GMP} together with \cite{BV18} and this paper, one will then have available Poisson cohomology computations for large classes of singular Poisson structures on 4- manifolds.

\section{Preliminaries}
\subsection{Poisson Geometry and Cohomology}\label{Preliminary:Cohomology}
We recall some basic facts about Poisson geometry, referring  the reader to e.g. \cite{LGPV13} for details. Let $M$ be a smooth manifold and $C^\infty(M)$ be the sheaf of smooth $\mathbb{R}$-valued functions on $M$. A Poisson structure on $M$ is a Lie bracket $\{\cdot,\cdot\}$ on $C^\infty(M)$ obeying the Leibniz rule $\{fg,h\}=f\{g,h\}+g\{f,h\}$.  Let $\mathfrak{X}^{p}(M) = \Gamma(\Lambda^p TM)$ be the space of $p$- vector fields on $M$ and  $\left[ \cdot ,  \cdot \right]_{\mathrm{SN}} \colon \mathfrak{X}^{p}(M)  \times \mathfrak{X}^{q}(M) \rightarrow \mathfrak{X}^{p+q-1}(M)$ the Schouten-Nijenhuis bracket. A Poisson structure on $M$ can be equivalently described by a bivector field $\pi \in \mathfrak{X}^2(M)$, called {\em Poisson  bivector},  satisfying $\left[ \pi, \pi \right]_{\mathrm{SN}}=0$.  In local coordinates $\{x_1,\ldots,x_n\}$, a Poisson bivector is determined by an antisymmetric matrix $\pi^{i,j}$, written explicitly as $\pi=\sum_{1\leq i<j\leq n}\pi^{i,j}\partial_{i}\wedge\partial_{j}$. The pair $(M, \pi)$ is called a {\em Poisson manifold}. We henceforth assume a Poisson manifold $(M,\pi)$ and establish some notation.

For use in the computations of Section \ref{near-positive cohomology}, we fix the formula for $[\cdot,\cdot]_{\rm SN}$. Consider $\zeta_i=\partial_{x_i}$ as an odd variable, so that $\zeta_i\zeta_j=-\zeta_j\zeta_i$. A $p-$ vector field $P\in\mathfrak{X}^p(M)$ is written as $\displaystyle P=\sum_{i_1<\cdots<i_p}P_{i_1\cdots i_p}\zeta_{i_1}\cdots\zeta_{i_p}$, with $P_{i_1\cdots i_p}\in C^\infty(M)$. Then for $Q\in\mathfrak{X}^q(M)$, define
\begin{equation}\label{SN}
[P,Q]_{\rm SN}=\sum_i\partial_{\zeta_i}(P)\partial_{x_i}(Q)-(-1)^{(p-1)(q-1)}\partial_{\zeta_i}(Q)\partial_{x_i}(P).
\end{equation}

 \noindent where $\partial_{\zeta_{i_k}}\big(\zeta_{i_1}\cdots\zeta_{i_p}\big)=(-1)^{p-k}\zeta_{i_1}\cdots\widehat{\zeta_{i_k}}\cdots\zeta_{i_p}$.

Interior contraction with $\pi$ defines a vector bundle homomorphism, which on the spaces of sections reads $ \pi^{\sharp} \colon \Omega^1(M) \rightarrow \mathfrak{X}^1(M) $, and is given pointwise by 
$\pi^{\sharp}_{p} (\alpha_p) = \pi_p(\alpha_p,\cdot)$. It is called the {\em anchor map}. This map extends to a $C^\infty(M)$- linear homomorphism
\begin{equation}\label{sharp}
\wedge^\bullet\pi^\sharp:  \Omega^\bullet(M)\longrightarrow\mathfrak{X}^\bullet(M),
\end{equation}
which we denote again by $\pi^\sharp$ for simplicity. 

A vector field $X$ is said to be a {\em Poisson vector field}, if $\mathcal{L}_{X} \pi = 0$. Additionally, the vector field $X_f = \pi^{\sharp} (df) $ 
is called the {\em Hamiltonian vector field} of the Hamiltonian function $f\in C^{\infty}(M)$.  One can check directly that every Hamiltonian vector field is Poisson. 

Due to the Poisson condition on $\pi$, the operator 

\begin{equation}\label{cobound}
\di_\pi\colon \mathfrak{X}^{\bullet}(M) \rightarrow \mathfrak{X}^{\bullet + 1}(M),\;\;\;\;\;
X \mapsto \di_{\pi}(X) = \left[\pi,X \right]_{\mathrm{SN}}
\end{equation}

\noindent is a differential of the exterior algebra $\mathfrak{X}(M) = \oplus_{k} \mathfrak{X}^{k}(M)$ leading to the following.

\begin{definition} The pair $(\mathfrak{X}(M), \di_\pi)$  is called the {\em Lichnerowicz-Poisson cochain complex}, and 
$$H^{k}_{\pi}(M) := \frac{\ker\left( \di_{\pi}^k\colon \mathfrak{X}^{k}(M) \rightarrow \mathfrak{X}^{k+1}(M) \right) }{ \mathrm{Im}\left( \di_{\pi}^{k-1}\colon \mathfrak{X}^{k-1}(M) \rightarrow \mathfrak{X}^{k}(M) \right)},$$
are called the {\em Poisson cohomology spaces} of $(M, \pi)$. 
\end{definition}
\noindent For our purposes we recall the interpretation of the lower Poisson cohomology groups:
\begin{align}
H^{0}_{\pi}(M) &= \left\lbrace \textnormal{Casimir functions}  \right\rbrace  \quad 
\vspace{2mm}
\nonumber
\\
H^{1}_{\pi}(M) &= \frac{ \lbrace \textnormal{Poisson vector fields}\rbrace }{ \lbrace \textnormal{Hamiltonian vector fields} \rbrace}  
\nonumber
\\
H^{2}_{\pi}(M) &= \frac{ \lbrace \textnormal{infinitesimal deformations of $\pi$ }\rbrace }{ \lbrace \textnormal{trivial deformations of $\pi$} \rbrace}  
\nonumber
\\
H^{3}_{\pi}(M) &= \lbrace \textnormal{obstructions to formal deformations of $\pi$}  \rbrace.
\nonumber
\end{align}

The map (\ref{sharp}) is a chain map  and defines a homomorphism of graded Lie algebras 
\begin{equation}\label{chain}
\hat{\pi}^{\sharp}\colon H_{\text{dR}}^{\bullet}(M) \rightarrow H_{\pi}^{\bullet}(M).
\end{equation}
In general, $\hat{\pi}^{\sharp}$ is neither injective nor surjective, however if $(M,\omega)$ is symplectic with associated Poisson structure $\pi_\omega$,  its Poisson cohomology is known, as $\hat{\pi}_\omega^{\sharp}$ is an isomorphism: 
$$ H_{\text{dR}}^{\bullet}(M) \simeq H_{\pi_\omega}^{\bullet}(M),$$ and $[\pi^\sharp(\omega)] = \left[ \pi_{\omega} \right]$. 

The first cohomology group encompasses a distinctive object of a Poisson structure, the modular class. To define it, consider an orientable Poisson manifold with positive oriented volume form $\Omega$. The mapping 
\begin{align*}
Y^{\Omega}\colon C^{\infty}(M) &\rightarrow C^{\infty}(M)
\end{align*}
defined by 
$$\mathcal{L}_{X_{f}} \Omega = (Y^{\Omega} f) \Omega$$ is a Poisson vector field. The vector field $Y^{\Omega}$ is known as the {\em modular vector field} with respect to $\Omega$. For another choice $\Omega'=g\cdot \Omega$, $g\in C^\infty(M)$, the vector fields $Y^{\Omega'}$ and $Y^\Omega$ differ by a Hamiltonian vector field and thus there is a canonically defined Poisson cohomology class $\left[  Y^{\Omega} \right] $ called the \textsl{modular class} of $(M,\pi)$. One can check directly that $[Y^\Omega]=0$ if and only if $\Omega$ is invariant by the flows of all Hamiltonian vector fields. Modular vector fields and modular classes are defined for non-orientable Poisson manifolds using densities. Finally we recall that a Posson structure $\pi$ is said to be \textit{exact}  if the fundamental cohomology class $\left[ \pi \right]$ vanishes.

\subsection{Near-symplectic structures and Euler vector fields}\label{sec:near-symp}
\subsubsection{Near-symplectic Forms}

Since we are interested in the connection between Poisson and near-symplectic geometry, we briefly recall some facts about near-symplectic structures. We refer the reader to \cite{ADK05, P07, T98, T99, V16} and the references within for a detailed exposition on these structures.

Let $V$ be a 4-dimensional vector space. The wedge product $\wedge\colon \Lambda^2 V^{*} \otimes \Lambda^2 V^{*} \rightarrow \Lambda^4 V^{*}$ defines a quadratic form of signature $(3,3)$ on $\Lambda^2 V^{*}$. This decomposes the second degree of the exterior algebra as $\Lambda^2 V^{*} = \Lambda^2_{+} V^{*} \oplus \Lambda^2_{-} V^{*}$, where $\Lambda^2_{+} V^{*} = \lbrace \alpha\in \Lambda^2 V^{*} \mid \alpha\wedge \alpha \geq 0 \rbrace$ and $\Lambda^2_{-} V^{*}= \lbrace  \alpha\in \Lambda^2 V^{*} \mid \alpha \wedge \alpha \leq 0 \rbrace$.  In  coordinates $(\theta, x_1, x_2, x_3)$ on $V$, one can write a basis for each one of these spaces

\begin{align}\label{BasisSD}
&\Lambda_{+}^2 V^*=\mathbb{R}\langle\beta_1,\beta_2,\beta_3\rangle\;\;\text{with}                                  &  \qquad  &     \Lambda^{2}_{-}V^*=\mathbb{R}\langle\beta_4,\beta_5,\beta_6\rangle\;\;\text{with}                                                           
\\
&\beta_1 = d\theta\wedge dx_1 + dx_2\wedge dx_3,   &  \qquad  &   \beta_4 = d\theta\wedge dx_1 - dx_2\wedge dx_3 ,        
\nonumber \\
&\beta_2 = d\theta\wedge dx_2 - dx_1\wedge dx_3,   &  \qquad  &   \beta_5 = d\theta\wedge dx_2 + dx_1\wedge dx_3,         
\nonumber \\
&\beta_3 = d\theta\wedge dx_3 + dx_1\wedge dx_2 ,  &  \qquad  &   \beta_6 = d\theta \wedge dx_3 - dx_1\wedge dx_2 .        
\nonumber 
\end{align}

Let $M$ be a smooth, oriented 4-manifold.  Consider a 2-form $\omega\in \Omega^2(M)$ with the property of being {\em near-positive} everywhere, that is $\omega^2 \geq 0$.  Such a form can have rank 0, 2, or 4 at any point.  Genericity results (see Theorem \ref{Taubes98} below) show us that it is of interest to look at 2-forms that only have rank 0 or 4 at any point.   Let $D \omega \colon TM \rightarrow \Lambda^2 T^{*}M$ be the derivative of $\omega$ on tangent spaces (not exterior differential). Since $\omega$ is assumed to be near-positive, the image $D\omega$ can be at most of dimension 4. By setting ${\rm Rank} (D\omega) = 3$, one obtains an identification of the image of $D\omega$ with the positive bundle of self-dual forms.

\begin{definition}
A {\em near-symplectic form} is a closed 2-form $\omega\in \Omega^2(M)$ such that $\omega^2 \geq 0$ and at every point $p\in M$, either
\begin{itemize}
\item[(i)] $\omega_p$ is symplectic, or 

\item[(ii)] $\omega_p = 0$ and ${\rm Rank}(D\omega_p) = 3$. 
\end{itemize}
Its {\em singular locus}, $Z_{\omega} = \lbrace p\in M \, \mid \, \omega_p=0\rbrace$ is a 1-submanifold of $M$. 
\end{definition}

 It is possible to modify or reduce the number of components of the zero locus, but it has been shown that $Z_{\omega}$ is always non-empty unless the underlying manifold is symplectic \cite[Section 5]{T98SW}.

We recall the local expression of a near-sympletic form. Keeping the notation of local coordinates $(\theta, x_1, x_2, x_3)$, the basis elements (\ref{BasisSD}) will be used with the same letters to write local sections of $\Lambda_{+}^2 T^*M$.  A Darboux-type theorem \cite[Lem. 3.1]{P07}, \cite[Cor. 3.1]{V16} for near-symplectic forms tells us that on the tubular neighbourhood of $Z_{\omega}$ a near-symplectic form has the formal normal form  
\begin{equation}\label{omega:Darboux}
\omega =   x_1 \beta_1  - 2  x_2 \beta_2  + x_3 \beta_3.
\end{equation}
With respect to this model, $Z_{\omega}$ is given by the submanifold $\lbrace x_1 = x_2 = x_3 = 0 \rbrace$.

\subsubsection{Properties of near-symplectic forms}

Near-symplectic forms are related to self-dual harmonic 2-forms for some Riemannian metric.  This equivalent formulation appears in the work of different authors \cite{LB97, Ho04, T98, T98SW, T99}.  The relation between these geometric objects is described through the following statement.

\begin{theorem}\cite[Thm. 4]{T98} \cite[Prop. 1]{ADK05} \label{Taubes98}\label{PropADK}
Let $M$ be a smooth, oriented 4-manifold. For a near-symplectic form $\omega$ on $M$, there is a Riemannian metric $g$ on $M$ such that $\omega$ is self-dual and harmonic with respect to $g$. Conversely, if $M$ is compact and $b^{2}_{+}(M) \geq 1$, then for a generic Riemannian metric $g$ there is a closed, self-dual harmonic form $\omega$, that vanishes transversally as a section of $\Lambda^{2}_{+} T^*M$ and defines a near-symplectic structure. The zero set of $\omega$ is a finite, disjoint union of embedded circles.
\end{theorem}

\begin{remark}\label{rem-nearsymp but not symp}
There are smooth 4-manifolds that can have a near-symplectic form but not a symplectic one. For instance, it is well-known that if $M$ is a smooth, closed, 4-manifold admitting a symplectic form then $(1-b_1 + b_{2}^{+})$ must be even.  Hence, if $M$ is simply connected, then $b_{2}^{+}$ must be odd.  Nevertheless, any $M$ with $b_{2}^{+} \geq 1$ admits a near-symplectic form. 
\end{remark}

\begin{remark}\label{rem-splitting-nz}  
A near-symplectic form has the property of splitting the normal bundle $NZ_{\omega}$ of its singular locus $Z_{\omega}\subset M$ into two subbundles, a rank-1 bundle $L^1_{-}$ and a rank-2 bundle $L^2_{+}$. To see this, one uses the geometric information of the near-symplectic structure to construct a self-adjoint, trace-free automorphism $F\colon NZ_{\omega}\rightarrow NZ_{\omega}$. Its representative matrix is symmetric, traceless, and has three eigenvalues, two positive and one negative (for more details see \cite[sections 3 \& 4]{Ho04}, \cite[sec. 2.3]{P07}, \cite[sec. 2c]{T99}, \cite[sec. 4B]{V16}). The negative and positive eigensubspaces draw the corresponding bundles $L^1_{-}$ and $L^2_{+}$ . These properties are independent on the choice of the metric $g$. There can be many conformal classes $[g]$ for which $\omega$ is self-dual, yet they are all the same along $Z_{\omega}$ because $D \omega$ identifies the normal bundle $NZ_{\omega}$ with  $\Lambda^2_{+} T^*M$ at each point of $Z_{\omega}$.  Therefore, a near-symplectic form $\omega$ determines a canonical embedding of the intrinsic normal bundle $NZ_{\omega}$ as a subbundle of $TM|_{Z_\omega}$ complementary to $TZ_\omega$ \cite{P07, T99}.  This is summarized for later use in the following Lemma.
\end{remark}

\begin{lemma}\cite{P07, T99, V16}
Let $(M, \omega)$ be a near-symplectic manifold with singular locus $Z_{\omega}$. The normal bundle $NZ_{\omega}$ of $Z_{\omega}$ splits into a line bundle $L^{1}_{-}$ and a rank 2 bundle $L^{2}_{+}$, i.e $NZ_{\omega} \simeq L^{1}_{-} \oplus L^{2}_{+}$. 
\end{lemma}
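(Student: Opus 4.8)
The plan is to assemble the construction developed in the paragraphs preceding the statement into a coherent argument, organized in two parts: first producing the canonical embedding $N_Z \hookrightarrow TM|_Z$, and then exhibiting the eigenbundle splitting. Throughout I would work fibrewise at a point $z \in Z_\omega$ and check at the end that the pointwise data glue to honest bundle maps.

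For the embedding I would start from the kernel $K = \varepsilon \oplus N_Z$, where $\varepsilon = \ker(\omega_Z)$ is the line coming from the splitting $TZ_\omega \simeq \mathrm{Symp}_Z \oplus \varepsilon$ of Remark \ref{symp}, and $N_Z$ is the intrinsic normal bundle. Choosing a metric $g$ on $K$ for which $\omega|_K$ is self-dual lets me realize $N_Z \simeq (TZ_\omega)^{\perp}$ as a subbundle of $TM|_Z$ complementary to $TZ_\omega$. The key input here is Lemma \ref{lem:singular-locus} together with Lemma \ref{lemma:rank-gradient}: since $Z_\omega$ is cut out cleanly with $\mathrm{Rank}(\nabla \omega^{n-1}) = 3$, the intrinsic gradient $D_K = \nabla\omega|_K$ has $3$-dimensional image, which I identify with the self-dual forms $\Lambda^2_+ K^*$.

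For the splitting I would form the endomorphism $F\colon N_Z \to N_Z$ as the composition $N_Z \xrightarrow{D_K} \Lambda^2_+ K^* \xrightarrow{\iota_X} N_Z^* \xrightarrow{\,g\,} N_Z$, where $\iota_X$ is interior product with the unit field $X = \partial/\partial x_0$ along $\varepsilon$ and the last arrow is the metric isomorphism. Self-adjointness and trace-freeness of $F$ follow from the symmetry of the wedge pairing $q(\alpha)=\alpha\wedge\alpha$ on $\Lambda^2 K^*$ and from $\omega$ being self-dual for $g$; I would verify these by a direct computation in an adapted frame $(\mathbf{u},\mathbf{k})$. A symmetric trace-free $3\times 3$ matrix has three real eigenvalues summing to zero, and the transversality condition $\mathrm{Rank}(D_K)=3$ forces them all nonzero; the sign convention in the near-symplectic literature then fixes two positive and one negative. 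The eigenspaces assemble into the rank-$1$ bundle $L^1_-$ (negative eigenvalue) and the rank-$2$ bundle $L^2_+$ (positive eigenvalues), orthogonal for $g$, giving $N_Z \simeq L^1_- \oplus L^2_+$.

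The main obstacle, and the part deserving the most care, is canonicity: both the embedding and the splitting are built from a choice of metric $g$, yet the lemma asserts they depend on $\omega$ alone. The resolution is that along $Z_\omega$ the covariant derivative $\nabla\omega$ identifies $N_Z$ with $\Lambda^2_+ K^*$ independently of $g$, because on $Z_\omega$ one has $\omega^{n-2}\neq 0$ while $\omega^{n-1}=0$, so the leading normal behaviour of $\omega$ is intrinsic. I would argue that any two conformal classes $[g]$ making $\omega$ self-dual agree along $Z_\omega$, hence produce the same image $\Lambda^2_+ K^*$ and the same sign decomposition of $F$. This shows that the embedding $N_{Z/M}\subset TM|_Z$ and the eigenbundle decomposition are determined by $\omega$, completing the recollection.
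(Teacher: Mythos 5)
Your architecture matches the paper's construction step for step: the kernel splitting $K=\varepsilon\oplus N_Z$ of Remark \ref{symp}, the metric-dependent identification $N_Z\simeq (TZ_\omega)^{\perp}$, the endomorphism $F = g\circ\iota_X\circ D_K$, the eigenvalue count for a symmetric trace-free invertible $3\times 3$ matrix, and the conformal-class argument for independence of $g$. However, there is a genuine gap at the step that carries the whole splitting: you assert that self-adjointness and trace-freeness of $F$ ``follow from the symmetry of the wedge pairing $q(\alpha)=\alpha\wedge\alpha$ on $\Lambda^2K^*$ and from $\omega$ being self-dual for $g$.'' That is false: those inputs only constrain $\Image(D_K)$ to lie in $\Lambda^2_+K^*$; they impose no symmetry and no trace condition on $F$ whatsoever. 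Concretely, in an adapted frame write $\nabla_{e_j}\omega=\di x_0\wedge\bigl(\textstyle\sum_i A_{ij}\di x_i\bigr)+\ast_3\bigl(\textstyle\sum_i A_{ij}\di x_i\bigr)$; for an \emph{arbitrary} $3\times3$ matrix $A$ this is the intrinsic gradient of a near-positive germ whose image consists of self-dual forms, and $F$ is represented precisely by $A$. What forces $A$ to be symmetric and trace-free is the closedness of $\omega$, which your proposal never invokes: along $Z_\omega$, the components of $\di\omega$ containing $\di x_0$ vanish exactly when $A=A^{T}$, and the $\di x_1\wedge\di x_2\wedge\di x_3$ component of $\di\omega$ equals $\mathrm{tr}(A)\,\mathrm{vol}_3$, so it vanishes exactly when $\mathrm{tr}(A)=0$. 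Without $\di\omega=0$ the matrix $A$ need not have real eigenvalues of the required pattern (two positive, one negative), so the planned ``direct computation in an adapted frame'' cannot close as described, and the decomposition $N_Z\simeq L^1_-\oplus L^2_+$ does not follow from the inputs you list.

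A smaller misattribution: the $3$-dimensionality of $\Image(D_K)$ is not a consequence of Lemmas \ref{lemma:rank-gradient} and \ref{lem:singular-locus}. Lemma \ref{lemma:rank-gradient} only gives the upper bound $\mathrm{Rank}(D_K)\le 3$, and the clean cut-out in Lemma \ref{lem:singular-locus} is itself deduced from the rank-$3$ hypothesis; equality of the rank is the transversality clause (ii) of Definition \ref{def:near-symplectic-2n}, and that is what should be cited. This point is cosmetic, but the missing use of closedness above is essential: it is the only place in the entire splitting argument where the hypothesis that $\omega$ is closed, rather than merely near-positive with transversely vanishing gradient, actually enters.
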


\subsection{Euler-like vector fields and Tubular Neighbourhoods}
 In this section we recall some notions on linear approximations, Euler-like vector fields, and tubular neighbourhoods based on \cite{BLM}.  Let $Z\subset M$ be a smooth submanifold and denote by $NZ= \nu(M,Z)= TM|_Z \slash TZ$ the normal bundle of $Z$. Let also  $p\colon \nu(M,Z) \rightarrow Z$, $i\colon Z \rightarrow M$ be the projection and inclusion maps. 

For a vector bundle $F\rightarrow Z$, the normal bundle relative to the zero section is  $\nu(F,Z) = F$. The normal bundle of $TM$ relative to $TZ$ is canonically isomorphic to the tangent bundle of the normal bundle. In particular, the normal and the tangent functors commute, and there is a canonical isomorphism $\nu(TM, TZ) \cong T_{\nu}(M,Z)$. Let $N_1 \subset M_1$ and $N_2 \subset M_2$ be submanifolds of $M_1$ and $M_2$.  A smooth map of pairs $\psi\colon (M_1, N_1) \rightarrow (M_2, N_2)$ taking $M_1$ to $M_2$, and $N_1$ to $N_2$, induces a map on normal bundles $\nu(\psi) \colon \nu(M_1, N_1) \rightarrow \nu(M_2, N_2)$.  For instance, take a vector field $X\in \mathfrak{X}(M)$ tangent to a submanifold $Z$. View $X$ as a section $M\rightarrow TM$. The condition of $X$ being tangent to $Z$ means that it takes $Z$ to the submanifold $TZ$, i.e. it defines a map of pairs $X\colon (M,Z) \rightarrow (TM, TZ)$. Applying the normal functor, one obtains a map $\nu(X) \colon \nu(M,Z) \rightarrow \nu(TM, TZ) = T\nu(M,Z)$.  In this way, from a vector field tangent to $Z$, one obtains a vector field on the normal bundle of $TM$ relative to $TZ$, called the {\em linear approximation}.  Linear approximation is a coordinate-free way of defining a tensor field, including Poisson bivectors and other multivector fields.  

\begin{definition}\cite[Def. 2.6]{BLM}
Let $Z\subset M$ be a submanifold and $\mathcal{E} \in \mathfrak{X}(\nu (M,Z))$ an Euler vector field. A vector field $R\in \mathfrak{X}(M)$ is called {\em Euler-like} if $R$ is complete, $R|_{Z} = 0$, with linear approximation $\nu(R) = \mathcal{E}$. 
\end{definition}

\noindent Linear approximations serve in the following definition of tubular neighbourhoods. 

\begin{definition}\cite[Def. 2.3]{BLM}
A {\em tubular neighbourhood embedding} for $Z\subset M$ is an embedding of the normal bundle $\psi\colon \big(\nu (M, Z), Z \big) \rightarrow (M, Z)$ such that: {\em (i)} it takes the zero section of $\nu(M,Z)$ to $Z$, and {\em (ii)} its linear approximation is the identity map, i.e. $\nu(\psi)= \textnormal{id}$. 
\end{definition}

There is a direct connection between Euler-like vector fields and tubular neighbourhood embeddings. If $\mathcal{E}$ is the Euler vector field on the normal bundle, then any tubular neighbourhood embedding carries $\mathcal{E}$ to an Euler-like vector field defined in a neighborhood of $Z$ in $M$. 

\begin{proposition}\cite[Prop. 2.7]{BLM}\label{PropBLM}
Let $Z\subset M$ be a submanifold and $\mathcal{E} \in \mathfrak{X}(\nu(M,Z))$ an Euler vector field. Any $R\in \mathfrak{X}(M)$ Euler-like along $Z$,  determines a unique tubular neighbourhood embedding $\psi\colon \nu(M,Z) \rightarrow M$ with 
$$\psi_{*}\mathcal{E} = R.$$
\end{proposition}

\noindent In particular, Euler-like vector fields are always {\em linearizable} \cite[Lemma 2.4]{BLM}. 

\section{Induced singular Poisson structure}\label{sec:singular-structures} 
In this section we construct a Poisson structure in a near-symplectic manifold.

\subsection{Poisson structures in near-symplectic 4-manifolds}\label{sec:near-symp-Pois}

\begin{proposition}\label{prop:nearsymp}
Let $(M, \omega)$ be a closed near-symplectic 4-manifold with singular locus $Z_{\omega}$. Denote by $U_Z\subset M$ a tubular neighbourhood of $Z_{\omega}$.   There is a singular Poisson structure $\pi_U$ of generic rank 4 on $U_Z \subset M$ with the following characteristics:  The degeneracy locus $D_\pi \subset U_Z$ of $\pi_U$ is a 2-dimensional surface containing $Z_\omega$ and \newline
i) $\pi_U|_p^2 > 0$ for all $p\in  U_Z \setminus  D_{\pi}$\newline
ii) $\pi_U|_p = 0$  for all $p\in D_{\pi}$. 

\end{proposition}

\begin{proof} Recall that given a near-symplectic form, the normal bundle splits into $NZ_{\omega} = L^1 \oplus L^2$, a rank 1-bundle $L^1$ and a rank 2-bundle $L^2$.   We use this splitting property induced from $\omega$ to construct a Poisson structure on the tubular neighbourhood of $Z_{\omega}$. 

Let  $\mathcal{E}_{-}, \mathcal{E}_{+}$ be Euler vector fields on $L^1_{-}$ and $L^2_{+}$ respectively. In bundle coordinates, with $(x_2)$ on $L^1_{-}$, and $(x_1, x_3)$ on $L^2_{+}$  they are expressed as 

$$\mathcal{E}_{-} = x_2 \frac{\partial}{\partial x_2},
\qquad 
\mathcal{E}_{+} = x_1 \frac{\partial}{\partial x_1} + x_3 \frac{\partial}{\partial x_3}.
$$
In particular, one can read that $\mathcal{E}_{-}|_{L^{2}_+} = 0,\; \mathcal{E}_{+}|_{L^{1}_{-}} = 0$, , and  $\mathcal{E}_{\pm}|_{Z_{\omega}} = 0$ at the zero section.

Recall that in dimension 4 the singular locus $Z_{\omega}$ consists of a collection of embedded circles. For purposes of clarity we will work over one connected component, i.e.  one circle $Z_{\omega} = S^1$. Let $X \in \mathfrak{X}(S^1)$ be the unit tangent vector field so that $X = \frac{\partial}{\partial \theta}$. Let $R \in \mathfrak{X}(M)$ be an Euler-like vector field along $Z_{\omega}$.  By proposition \ref{PropBLM} a unique tubular neighbourhood embedding $\psi \colon NZ_{\omega} \to M$ is determined by $R$ with $\psi_{*} \mathcal{E} = R$.  

Denote by $U_Z = \psi(NZ_{\omega})$ the tubular neighbourhood of $Z_{\omega}$ in $M$.   Define on $U_Z$ the following bivector field 
\begin{equation}
\eta:= X\wedge \psi_{*} \mathcal{E}_+ = X \wedge R_{+} ,
\end{equation}
where $R_+\in\mathfrak{X}(M)$ is Euler-like on $U_Z$ with $\psi_{*}(\mathcal{E}_{+}) = R_{+}$. 

The previous $\eta$ constitutes one part of the bivector we want to construct. Recall that given a near-symplectic form on a closed 4-manifold, there is a metric $g$ such that $\omega$ is self-dual and vanishes on a collection of circles. 

Let $\ast$ be the Hodge operator with respect to this $g$ such that $\ast \omega=\omega$. Using the orientation given by the volume form $\omega^2$, one can define a Hodge duality isomorphism from the exterior algebra of the cotangent bundle to the one of the tangent bundle, thus obtaining a transformation of bivector fields, $\Lambda^2 TM \rightarrow \Lambda^2 TM $.  This transformation acts again as Hodge operator, and it is defined with respect to the volume form and a metric that makes $\omega$ self-dual.  Hence, by slight abuse of notation we denote it again by $\ast \colon \Lambda^2 TM \rightarrow \Lambda^2 TM$. The construction is independent on the particular choice of $\omega$ and $g$, since given any near-symplectic form, we can find a Riemannian metric $g$ such that $\omega$ is a self-dual harmonic 2-form vanishing on a 1-submanifold of $M$ (see Thm. \ref{Taubes98}).

Consider the following bivector field on $U_Z$
\begin{equation}\label{pi:U}
\pi_U = \eta + \ast \eta =  X\wedge R_{+} + \ast (X\wedge R_{+}).
\end{equation}
For a sufficiently small neighbourhood around $Z_{\omega}$, the linear model of $\pi_U$ is given by 
\begin{equation}\label{pi:near-pos}
\pi_U = x_1 \left( \frac{\partial}{\partial \theta} \wedge \frac{\partial}{\partial x_1}  + \frac{\partial}{\partial x_2} \wedge \frac{\partial}{\partial x_3} \right)  + x_3 \left( \frac{\partial}{\partial \theta} \wedge \frac{\partial}{\partial x_3}  + \frac{\partial}{\partial x_1} \wedge \frac{\partial}{\partial x_2}
\right).
\end{equation}
This bivector vanishes on $\psi(L_{-}^{1})$, which includes the singular locus of $\omega$. A calculation shows that it satisfies the Poisson condition $\left[ \pi_U, \pi_U \right]_{\mathrm{SN}}=0$, and  $\pi_U^2 \geq 0$.
\end{proof}

Recall that the decomposition $NZ_{\omega} \simeq L^{1}_{-} \oplus L^2_{+}$ has two possible splittings, since the line bundle $L^{1}_{-}$ can be oriented or not. We mention this because the previous exposition finishing in Prop. \ref{prop:nearsymp} dealt with the case when $L^{1}_{-}$ is oriented.  Now we say a word about this splitting and conclude by addressing the non-oriented case in Lemma \ref{lemma:involution}.
 
From a topological perspective, since $U_Z \simeq S^1 \times D^3$ one can regard the tubular neighbourhood of a component of $Z_{\omega}$ to be the total space $S^1 \times D^3$ of a disk bundle over a circle given by a projection map, $S^1\times D^3 \rightarrow S^1$. In general, $D^{n-1}$-bundles over $S^1$, with a splitting $D^k\times D^{n-1}$ for $0 < k < n-1$, are classified by homotopy classes from $S^1$ into the Grassmanian $G(n-1, k)$. Due to the fact that $\pi_1 (G(n-1, k)) = \mathbb{Z}_2$, one finds two possible splittings up to isotopy. In the dimensions we are working, this can be observed by looking at $\pi_1(\mathbb{RP}^2) = \mathbb{Z} \slash \mathbb{Z}_2$.  Hence, this disk bundle preserves the decomposition of $NZ_{\omega}$ and splits into a $D^2$-bundle and a $D^1$-bundle over $S^1$. 

Thus it just remains to be checked that the model $\pi_U$ is also valid on the non-trivial splitting of $S^1 \times D^3$ for the non-oriented case. This is shown in the next lemma.

\begin{lemma}\label{lemma:involution}
The bivector field $\pi_U$ in \eqref{pi:near-pos} is Poisson on the two homotopy classes of splittings of $S^1 \times D^3 \rightarrow S^1$ over each component of $Z_{\omega}$. 
\end{lemma}

\begin{proof} The oriented case has already been shown through the previous proposition. 
The non-oriented model is given by the quotient of $S^1 \times D^3$ by an involution $\iota$ reversing the orientation on both summands of the splitting \cite{Ho04}. Explicitly,

\begin{align}\label{involution}
\iota\colon S^1 \times D^3 &\rightarrow S^1 \times D^3
\\
(\theta, x_1, x_2, x_3) &\mapsto (\theta+ \pi, -x_1, x_2, -x_3).
\nonumber
\end{align}

We just need to check that if the normal bundle is non-orientable, the local model \eqref{pi:near-pos} still provides a Poisson structure.  From the action of $\iota$ we obtain 

\begin{equation*}
\iota_{*}  \left ( \frac{\partial}{\partial \theta} \right ) =\frac{\partial}{\partial \theta}, 
\quad 
\iota_{*}  \left ( \frac{\partial}{\partial x_1} \right ) =-\frac{\partial}{\partial x_1}, 
\quad 
\iota_{*}  \left ( \frac{\partial}{\partial x_2} \right )=\frac{\partial}{\partial x_2}, 
\quad 
\iota_{*}  \left ( \frac{\partial}{\partial x_3} \right )=- \frac{\partial}{\partial x_3}.
\end{equation*}
Thus, $\iota_{*}\pi=\pi$ and the involution $\iota$ is a Poisson map for $\pi$.
\end{proof}

\begin{remark}
The construction of the Poisson structure $\pi_U$ from a near-symplectic form $\omega$ is not functorial. Furthermore,  the Poisson structure constructed here is not $(\omega)^{-1}$, that is, it is not the bivector associated with the symplectic form on the regular part.  There could be other Poisson structures in a near-symplectic manifold besides the one we construct, perhaps even at a global level. To this end, we tried to construct a deformation path of appropriate near-symplectic forms following Luttinger and Simpson \cite{LS96} but we run into different obstructions. In addition, given $\pi_U$ as presented above, one would not be able to reconstruct a near-symplectic form $\omega$ without making additional choices. 

There are nevertheless some geometrical features coming from the near-symplectic structure that serve in the construction of $\pi_U$. The Poisson bivector $\pi_U$ does depend on having a particular rank 3 vector bundle over a circle with a particular splitting into a rank 1 vector bundle and a rank 2 vector bundle.  This feature is guaranteed by a near-symplectic form where the normal bundle offers such a splitting. Additionally, the Poisson bivector is self-dual with respect to a metric g, and this metric is naturally associated with a near-symplectic form (see Theorem \ref{PropADK}). As noticed before, near-symplectic forms are generic as closed, self-dual forms with respect to Riemannian metrics. 

\end{remark}

\subsection{Remark on Higher Dimensions}
There is a notion of near-symplectic forms in dimension $2n$ (see \cite{V16} for more details).  On an oriented $2n$-dimensional manifold $M$, a {\em near-symplectic form} is a closed 2-form such that at every point either
\begin{itemize}
\item[(i)] $\omega_p^n>0$, i.e. $\omega_p$ is symplectic, or 

\item[(ii)] $\omega_p^{n-1}=0$ transversally along a codimension-3 submanifold of $M$. 
\end{itemize}

At the degeneracy points, such a form has a 4-dimensional kernel $K_p = \lbrace v\in T_pM \, \mid \, \omega_p(v, \cdot) = 0 \rbrace$.  The collection of fibrewise kernels constitutes the kernel $K: = \ker (\omega) \subset TM$ of the 2-form. 
At every degenerate point $p\in M$, the splitting of the wedge square holds: $\Lambda^2 K^{*} = \Lambda^2_{+} K^{*} \oplus \Lambda^2_{-} K^{*}$.  The bundle $\Lambda^2 K^*$ is a subbundle of $\Lambda^2 T^{*}M$ over $M$ and $\Lambda_{+}^2K^*$, $\Lambda_{-}^2K^*$ are rank 3 bundles. 

On $Z_{\omega}$, the 2-form $\omega_Z = i^{*}\omega$ is a closed 2-form of constant rank $2n-4$, thus it defines a presymplectic structure on $Z_{\omega}$. The corresponding local Darboux form is given by 
\begin{equation}\label{omega:Darboux}
\omega =  \omega_Z + x_1 \beta_1  - 2  x_2 \beta_2  + x_3 \beta_3, 
\end{equation}
where 
$\omega_Z = \sum_{i=1}^{n-2} dq_i \wedge dp_i$ \cite[Lem. 3.1]{P07}, \cite[Cor. 3.1]{V16}.

\begin{proposition}\label{allM}
Let $(M, \omega)$ be a near-symplectic manifold of $\dim(M) =2n$ with singular locus being a symplectic mapping torus $Z_{\omega} = \big((Q, \omega_Q) \times [0,1] \big) \slash \stackrel{\phi}{\sim}$. 
There is a Poisson structure on the tubular neighbourhood of $Z_{\omega}$ in $M$ such that $\pi^{n-1}$ vanishes on $Z_{\omega}$. 
\end{proposition}

\begin{proof}
We extend the construction of Proposition \ref{prop:nearsymp} by first defining a Poisson bivector $\pi_U$ on the tubular neighbourhood $U_Z$ of $Z_\omega$ as in equation \eqref{pi:U}, and then adding a symplectic Poisson structure on $Z_{\omega}$. Since $Z_{\omega}$ fibres over $S^1$ and $\varepsilon = \ker(\omega_Z)$ is an integrable line bundle, there is a non-vanishing section $X\in \Gamma (\Lambda^1 \varepsilon)$.  The kernel $K:= \varepsilon \oplus NZ_{\omega} \subset TM$ of $\omega$ splits as $K = \varepsilon \oplus L_{-}^{1} \oplus L_{+}^{2}$ due to the splitting of $NZ_{\omega}$ (Remark \ref {rem-splitting-nz}).  Consider the Euler vector field $\mathcal{E}_{+}$ on $L^2_{+}$. By definition the kernel $K\subset TM$ is a rank 4 bundle, and since $Z_{\omega}$ is a mapping torus, we can look at self-dual forms on $\Lambda^2K^{*}$ vanishing on circles.  Fix a metric $g_K$ on $K$ such that $\omega$ is self-dual with respect to $g_K$ on $\Lambda^2 K^*$.   Using the orientation given by the volume form $\omega^n$, we can obtain a transformation of bivector fields, $\ast_{g_{K}} \colon \Lambda^2 K \rightarrow \Lambda^2 K$.

The 2-form $\omega_Q$ descends to the quotient and is symplectic on $Z_{\omega}$. Moreover, the horizontal distribution $\mathcal{H}\subset TZ_{\omega}$ is involutive, thus the bivector field $\pi_Z := \omega_Q^{-1}$ defines a symplectic Poisson structure on $Z_{\omega}$.  This Poisson bivector has the property that $\pi^{n-2}_{Z} \not= 0$,  $\pi_{Z}^{n-1} = 0$.  On the tubular neighbourhood $U_Z$ define the bivector field 

\begin{equation}
\pi_U = X\wedge R_{+} + \ast_{g_K} (X\wedge R_{+}) + \pi_Z.
\end{equation} 
A local calculation shows that $\left[ \pi_U, \pi_U \right]_{\mathrm{SN}} = 0$, because it follows the same computation as in dimension 4, except for a symplectic Poisson bivector that is added to it. 
\end{proof}

\begin{remark}
The Poisson structure on a near-symplectic manifold that we constructed in Proposition \ref{allM}, belongs to the class of {\em almost regular} Poisson structures \cite{AZ} since it is generically symplectic. Almost regular Poisson structures include regular Poisson and log-symplectic structures  among others.  The Poisson structure induced by a near-symplectic form is neither regular, nor log-symplectic. 
\end{remark}

%
%

\section{Poisson Cohomology on 4-manifolds}\label{near-positive cohomology}

In this section we compute the Poisson cohomology with smooth coefficients of the Poisson structure constructed in the previous section.  Section \ref{notation d} contains the formulas of the coboundary operator $\di$, and Section \ref{smooth near-positive} computes the cohomology groups with formal coefficients before extending those computations in smooth cohomology. Our results show that the Poisson cohomology spaces vanish except for vector and bivector fields with constant coefficients. In particular, the modular field $\partial_0$ has a nontrivial cohomology class, while $[\pi]=0$.

\subsection{The Poisson coboundary operator}\label{notation d}
We start by writing down the equations of the Poisson coboundary operator (\ref{cobound}). To simplify the notation relabel the variable $\theta$ as $x_0$ and set $\partial_i:= \frac{\partial}{\partial x_i}$, so the local model of such Poisson bivector on the tubular neighbourhood $U_Z$ is 
\begin{equation}\label{eq:Near-Positive}
\pi=x_1(\partial_0\wedge\partial_1+\partial_2\wedge\partial_3)+x_3
(\partial_0\wedge\partial_3+\partial_1\wedge\partial_2). 
\end{equation}
The Hamiltonian vector fields of the coordinate functions for the Poisson structure (\ref{eq:Near-Positive}) are
\begin{align}
\pi^\sharp(\di x_0)&=x_1\partial_1+x_3\partial_3,
\label{eq:pisharp-dx0-nearpos}
\\
\pi^\sharp(\di x_1)&=-x_1\partial_0+x_3\partial_2,
\label{eq:pisharp-dx1-nearpos}
\\
\pi^\sharp(\di x_2)&=-x_3\partial_1+x_1\partial_3,
\label{eq:pisharp-dx2-nearpos}
\\
\pi^\sharp(\di x_3)&=-x_3\partial_0-x_1\partial_2
\label{eq:pisharp-dx3-nearpos}
\end{align}
Setting $X_k:=\pi^\sharp(\di x_k)$ one may rewrite the Poisson bivector (\ref{eq:Near-Positive}) as 
\[\pi=\frac{1}{2}\sum_{k=0}^3\partial_k\wedge X_k.\]

\noindent Recall that $\di=[\pi,\cdot]_{SN}: \mathfrak{X}^{\bullet}(\mathbb{R}^4)\rightarrow\mathfrak{X}^{\bullet+1}(\mathbb{R}^4)$  is the Poisson coboundary operator. For $f\in C^\infty(\mathbb{R}^4)$, it is then
\begin{align}\label{d^0np}
\di^0(f)=&[\pi,f]_{SN}\stackrel{(\ref{SN})}{=}\sum_{i=0}^3\partial_{\zeta_i}(\pi)\partial_i(g)+0
\\
\nonumber
=&   -     \frac{1}{2}\sum_{i=0}^3\sum_{s=0}^3\{x_i,x_s\}\partial_s\wedge\partial_i(f)    +  \frac{1}{2}\sum_{i=0}^3\sum_{k=0}^3\{x_k,x_i\}\partial_k\wedge \partial_i(f)
\\
\nonumber
=& 
 -    \sum_{i,k=0}^3\partial_i(f)\{x_i,x_k\}\partial_k=
\sum_{k=0}^3X_k(f)\partial_k,
\\
\nonumber
\end{align}
\noindent where we used the expression $X_k=\sum_{i=0}^3\{x_k,x_i\}\partial_i$
for the hamiltonian vector fields \eqref{eq:pisharp-dx0-nearpos}-\eqref{eq:pisharp-dx3-nearpos}.

\noindent Let $Y=\sum_{k=0}^3f_k\partial_k\in\mathfrak{X}^1$, and $s$ be the index completing the triplet $\{1,2,3\}$ once $i<j$ are chosen. Then setting $\partial_{ij}:=\partial_i\wedge\partial_j$ for $i<j$,
\begin{align}\label{d^1np}
\di^1(Y)=& \sum_{k=1}^3\big[X_0(f_k)-X_k(f_0)-\frac{1-(-1)^k}{2}f_k\big]\partial_{0k}
\nonumber
\\
+& \sum_{i<j=1}^3\big[X_i(f_j)-X_j(f_i)-\frac{1-(-1)^{i+j}}{2}f_s\big]\partial_{ij}.
\end{align}
\medskip
For clarity in our upcoming computations, we write $\di^1(Y)$ in its expanded form
\begin{align}\label{d^1exp}
\di^1(Y)=& \big[X_0(f_1)-X_1(f_0)-f_1\big]\partial_{01}
\nonumber
\\
+& \big[X_0(f_2)-X_2(f_0)\big]\partial_{02}
\nonumber
\\
+& \big[X_0(f_3)-X_3(f_0)-f_3\big]\partial_{03}
\nonumber
\\
+& \big[X_1(f_2)-X_2(f_1)-f_3\big]\partial_{12}
\nonumber
\\
+& \big[X_1(f_3)-X_3(f_1)\big]\partial_{13}
\nonumber
\\
+& \big[X_2(f_3)-X_3(f_2)-f_1\big]\partial_{23},
\end{align}
\medskip

\noindent  Denote an arbitrary bivector field as $\displaystyle W=\sum_{0\leq i<j\leq 3}f_{ij}\partial_{ij}\in\mathfrak{X}^2$. Furthermore, we set $\partial_{ijk}:=\partial_i\wedge\partial_j\wedge\partial_k$ for $i<j<k$. Then
\begin{align}\label{d^2np}
\di^2(W)=&\big[X_0(f_{12})-X_1(f_{02})+X_2(f_{01}) - f_{12} + f_{03}\big]\partial_{012}
\nonumber
\\
+ & \big[X_0(f_{13})-X_1(f_{03})+X_3(f_{01}) - 2f_{13}\big]\partial_{013}
\nonumber
\\
+&\big[X_0(f_{23})-X_2(f_{03})+X_3(f_{02}) + f_{01} - f_{23}\big]\partial_{023}
\nonumber
\\
+&\big[X_1(f_{23})-X_2(f_{13})+X_3(f_{12})\big]\partial_{123}.
\end{align}
\medskip

\noindent Finally, let $\displaystyle Z=\sum_{0\leq i<j<k\leq 3}f_{ijk}\partial_{ijk}\in\mathfrak{X}^3$ be an arbitrary 3-vector field. Then

\begin{equation}\label{d^3np}
\di^3(Z)= \big[X_0(f_{123})-X_1(f_{023})+X_2(f_{013})-X_3(f_{012})-2f_{123}\big]\partial_{0123}.
\end{equation}

\subsection{Smooth cohomology}\label{smooth near-positive}

\subsubsection{Preliminaries}
We start by setting some notation. Let 

\begin{tabular}{l}
$V_i=\mathbb{R}_i[x_0,x_1,x_2,x_3]$  be the space of homogeneous polynomials of degree $i\in\mathbb{N}_0$,
\\
$r_i:=\dim(V_i)$,
\\
$\mathfrak{X}^m_i$ the space of $m$-vector fields on the tubular neighborhood $U_Z$, whose \\
coefficients are elements of $V_i$,
\\
$V_\textrm{formal}=\mathbb{R}[[x_0,x_1,x_2,x_3]]$,
\\
$\mathfrak{X}^m_\text{formal}$ be the space of $m-$ vector fields on $U_Z$ with coefficients from $V_\textrm{formal}$.
\end{tabular} 

The restriction of $\di^m$ to $\mathfrak{X}^m_{\text{formal}}$ is denoted with the same letter, and since $\pi$ is linear, it can be further decomposed as 
\begin{equation}\label{split}
\di^m=\sum_{i=0}^\infty \di^m_i,\;\;\;\text{with}\;\;\di^m_i:=\di^m|_{\mathfrak{X}^m_i}: \mathfrak{X}^{m}_i\rightarrow\mathfrak{X}^{m+1}_i.
\end{equation}

With the notation for polyvector fields and their coefficient functions as in equations (\ref{d^0np}),  (\ref{d^1np}),  (\ref{d^2np}), and (\ref{d^3np}), the operators $\di^\bullet_i$ are identified as the maps in the following sequence of spaces representing the coefficients in the complex $(\mathfrak{X}^\bullet_i, \di^\bullet_i)$:

\begin{equation}\label{seq}
0\longrightarrow V_i\stackrel{\di_i^0}{\longrightarrow} V_i^{\otimes 4}\stackrel{\di_i^1}{\longrightarrow} V_i^{\otimes 6}\stackrel{\di_i^2}{\longrightarrow} V_i^{\otimes 4}\stackrel{\di_i^3}{\longrightarrow} V_i\stackrel{\di_i^{4}=0}{\longrightarrow} 0\longrightarrow\cdots
\end{equation}
and more precisely, dropping $\di_i^m,\;\forall m\geq4$,

\begin{equation}\label{seqf}
 f\stackrel{\di^0_i}{\longrightarrow} ( f_{0},f_{1},f_{2},f_{3}) \stackrel{\di^1_i}{\longrightarrow} ( f_{01},f_{02}, f_{03},f_{12},f_{13},f_{23}) \stackrel{\di^2_i}{\longrightarrow} ( f_{012},f_{013},f_{023},f_{123}) \stackrel{\di^3_i}{\longrightarrow} f_{0123}.
\end{equation}

Each $\Psi\in \mathfrak{X}^\bullet_\text{formal}$ will then be a cocycle if and only if each of its homogeneous components is itself a cocycle. Respectively, $\Psi$ will be a coboundary if and only if each of its homogeneous components is itself a coboundary. For this reason, we now fix all coefficient functions of $\bullet-$ vector fields to be in $V_i$.

\begin{definition}\label{x1x3degdef}
Let $\deg_{x_1x_3}$ denote the sum of degrees in the $(x_1,x_3)$- coordinates of an element in $V$, that is, $\deg_{x_1x_3}(x_0^{k_0}x_1^{k_1}x_2^{k_2}x_3^{k_3})=k_1+k_3$. We write $\deg_{x_1x_3}(f)=c$ if $f\in V$ is a polynomial of $\deg_{x_1x_3}-$ homogeneous terms of degree $c$.
\end{definition}

\begin{remark}\label{x1x3props}
If $\deg_{x_1x_3}(f)=c\in\mathbb{N}_0$, the action of Hamiltonian vector fields (\ref{eq:pisharp-dx0-nearpos}) - (\ref{eq:pisharp-dx3-nearpos}) is related to this degree as follows:

\begin{equation}\label{x1x3}
X_0(f)=cf, \;\;\;
\deg_{x_1x_3}\big(X_0(f)\big)=c, \;\;\;
\deg_{x_1x_3}\big(X_1(f)\big)=c+1,
\end{equation}
\[ \deg_{x_1x_3}\big(X_2(f)\big)=c,\;\;\;\deg_{x_1x_3}\big(X_3(f)\big)=c+1.\]
\end{remark}

\subsubsection{Computation of Poisson cohomology groups with formal coefficients}

Let  $H^m_i(U_Z,\pi)$ denote the $m$--th Poisson cohomology group with coefficients from $V_i$.

\begin{lemma}\label{H0}
 The cohomology group $H^0_i(U_Z,\pi)$ vanishes for all $i> 0$ and 
 \[H^0_0(U_Z,\pi)\simeq\mathbb{R}.\]
 \end{lemma}
  
\begin{proof} Since ${\rm Rank} (\pi)=4$, there are no non-constant Casimirs, so $\ker(\di^0_i)=0$ for every $i>0$. \end{proof}
 
 \begin{lemma}\label{H1}
 The cohomology group $H^1_i(U_Z,\pi)$ vanishes for all $i>0$ and
 \[H^1_0(U_Z,\pi)\simeq\langle\partial_0,\partial_2\rangle\simeq\mathbb{R}^2.\]
 \end{lemma}

 \begin{proof} As a result of Lemma \ref{H0}, it is $\mathrm{Im}(\di^0_i)\cong V_i,\;\dim(\mathrm{Im}(\di^0_i))=r_i, \forall i>0$. The image of $\di^0_i$ is spanned by vector fields of the following forms:

\begin{align}
 \di^0(x_0^{k_0})x_1^{k_1}x_2^{k_2}x_3^{k_3}=
k_0x_0^{k_0-1}x_1^{k_1}x_2^{k_2}x_3^{k_3}X_0&\;\;\;\;\;\;\;\; \;\;\;\;\;\;\;\;\;\;\;\;\;\;\;\;\;\;(\text{type A})
\nonumber
\\
\di^0(x_1^{k_1})x_0^{k_0}x_2^{k_2}x_3^{k_3}=
k_1x_0^{k_0}x_1^{k_1-1}x_2^{k_2}x_3^{k_3}X_1&\;\;\;\;\;\;\;\;\;\;\;\;\;\;\;\;\;\;\;\;\;\;\;\;\;\;(\text{type B})
\nonumber
\\
\di^0(x_2^{k_2})x_0^{k_0}x_1^{k_1}x_3^{k_3}=
k_2x_0^{k_0}x_1^{k_1}x_2^{k_2-1}x_3^{k_3}X_2&\;\;\;\;\;\;\;\;\;\;\;\;\;\;\;\;\;\;\;\;\;\;\;\;\;\;(\text{type C})
\nonumber
\\
\di^0(x_3^{k_3})x_0^{k_0}x_1^{k_1}x_2^{k_2}=
k_3x_0^{k_0}x_1^{k_1}x_2^{k_2}x_3^{k_3-1}X_3&\;\;\;\;\;\;\;\;\;\;\;\;\;\;\;\;\;\;\;\;\;\;\;\;\;\;(\text{type D})
\nonumber 
\end{align}
with $\sum_{s=0}^3k_s=i$. We will show that any $X\in\ker(\di^1_i)$ is written as a linear combination of vector fields of the types $A,B,C,D$.

Let $Y=f_0\partial_0+f_1\partial_1+f_2\partial_2+f_3\partial_3\in\ker(\di^1_i)$, i.e $f_k\in V_i$. Without loss of generality, assume that $f_0\in V_i$ is a monomial with scalar coefficient equal to $1$, so let $f_0=x_0^{k_0}x_1^{k_1}x_2^{k_2}x_3^{k_3}$ and assume $\deg_{x_1x_3}(f_0)=c\neq 0$. 

\noindent Vanishing the coefficient of $\partial_{02}$ in (\ref{d^1exp}) together with (\ref{x1x3}) implies that $f_0$ and $f_2$ must have the same $\deg_{x_1x_3}$ and in particular, 
\[X_2(f_0)=X_0(f_2)=cf_2.\] On the other hand, vanishing the coefficient of $\partial_{01}$ in (\ref{d^1exp}) together with (\ref{x1x3}), one gets that $\deg_{x_1x_3}(f_1)=c+1$ and so
\[X_1(f_0)=\big(\deg_{x_1x_3}(f_1)-1\big)f_1=cf_1.\] Applying the same argument for the coefficient of $\partial_{03}$ we get 
\[X_3(f_0)=cf_3.\]  Given that $k_3=c-k_1, k_2=i-c-k_0$, a direct computation with the formulas $\displaystyle f_j=\frac{1}{c}X_j(f_0),\;j=1,2,3,$ gives respectively

\begin{align}
f_1&=-\frac{k_0}{c}x_0^{k_0-1}x_1^{k_1+1}x_2^{i-c-k_0}x_3^{c-k_1}+\frac{i-c-k_0}{c}x_0^{k_0}x_1^{k_1}x_2^{i-c-k_0-1}x_3^{c-k_1+1},
\label{eq:f1d1}
\\
f_2&=-\frac{k_1}{c}x_0^{k_0}x_1^{k_1-1}x_2^{i-c-k_0}x_3^{c-k_1+1}+\frac{c-k_1}{c}x_0^{k_0}x_1^{k_1+1}x_2^{i-c-k_0}x_3^{c-k_1-1},
\label{eq:f2d1}
\\
f_3&=-\frac{k_0}{c}x_0^{k_0-1}x_1^{k_1}x_2^{i-c-k_0}x_3^{c-k_1+1}-\frac{i-c-k_0}{c}x_0^{k_0}x_1^{k_1+1}x_2^{i-c-k_0-1}x_3^{c-k_1}.
\label{eq:f3d1}
\end{align}

\noindent Splitting the coefficient of $f_0$ as $1=\frac{c-k_1}{c}+\frac{k_1}{c}$, the vector field $Y=f_0\partial_0+f_1\partial_1+f_2\partial_2+f_3\partial_3$ is now  written as

\begin{align}\label{h1np}
Y=& -\left(\frac{k_0}{c}x_0^{k_0-1}x_1^{k_1}x_2^{i-c-k_0}x_3^{c-k_1} \right) X_0
\nonumber
\\
 &- \left( \frac{k_1}{c}x_0^{k_0}x_1^{k_1-1}x_2^{i-c-k_0}x_3^{c-k_1} \right) X_1
\nonumber
\\
 &-\left( \frac{i-c-k_0}{c}x_0^{k_0}x_1^{k_1}x_2^{i-c-k_0-1}x_3^{c-k_1} \right) X_2
\nonumber
\\
 &- \left( \frac{c-k_1}{c}x_0^{k_0}x_1^{k_1}x_2^{i-c-k_0}x_3^{c-k_1-1}  \right) X_3
\end{align}
which is a linear combination of types $A,B,C,D$ and so $Y$ is in $\mathrm{Im}(\di^0_i)$. Thus if $c\neq 0$, it is $\ker(\di_i^1)\subset \mathrm{Im}(\di_i^0)$ for all $i>0$.

The case $\deg_{x_1x_3}(f_0)=0$, is essentially the constant coefficient case.  Indeed, if $f_0$ does not depend on $x_1,x_3$, then $X_0(f_0)=X_2(f_0)=0$ and $\deg_{x_1x_3}(X_1(f_0))=1$. Vanishing the coefficient of $\partial_{01}$, one gets that $X_0(f_1)-X_1(f_0)-f_1=0$. This equation must hold for all terms of the same $\deg_{x_1x_3}-$ degree, so

\[1=\deg_{x_1x_3}(X_1(f_0))=\deg_{x_1x_3}(X_0(f_1)-f_1)\stackrel{(\ref{x1x3})}{\Rightarrow} \deg_{x_1x_3}(f_1)=1.\]
 But then it is $X_0(f_1)-f_1=0$, and so $X_1(f_0)=0$ i.e. $f_0$ does not depend on $x_0, x_2$ either and thus it is constant. The same argument applied on the coefficient of $\partial_{03}$ shows that  $\deg_{x_1x_3}(f_3)=1$.  

\noindent Setting the coefficient of $\partial_{12}$ equal to zero, one gets 
\begin{equation}\label{12}
f_3=X_1(f_2)-X_2(f_1).
\end{equation}
Doing the same for $\partial_{23}$,
\begin{equation}\label{23}
f_1=X_2(f_3)-X_3(f_2).
\end{equation}
Replace (\ref{23}) into (\ref{12}) to get
\begin{equation}\label{f3no}
f_3+X_2(X_2(f_3))=X_2(X_3(f_2)+X_1(f_2).
\end{equation}
Observe that since $\deg_{x_1x_3}(f_3)=1$, a direct calculation shows that $X_2(X_2(f_3))=-f_3$ and so 
\begin{equation}\label{f2ct}
X_2(X_3(f_2)=X_1(f_2).
\end{equation} Now
setting the coefficient of the bivector $\partial_{02}$ in (\ref{d^1exp}) to be zero, we also get that $X_0(f_2)=0$ and so $f_2$ does not depend on $x_1,x_3$.  A direct computation with (\ref{f2ct}) then shows that $f_2$ can only be constant.

Since $\deg_{x_1x_3}(f_1)=\deg_{x_1x_3}(f_3)=1$ assume without loss of generality that $f_1=x_1+ax_3, f_3=bx_1+dx_3$ for $a,b,d\in\mathbb{R}$. Equations (\ref{12}), (\ref{23}) then give $d=1,a=-b$, and as a result, the vector field $f_1\partial_1+f_3\partial_3=X_0-aX_2$ is in $\mathrm{Im}(\di^0)$.   This means that in terms of Poisson cohomology classes, if $Y\in\ker(\di^1)$ with $Y\in\mathfrak{X}^1_0$, then $[Y]$ is determined by $(f_0,f_2)\in\mathbb{R}^2$.   \end{proof}

\begin{lemma}\label{H4}
 The cohomology group $H^4_i(U_Z,\pi)$ vanishes for all $i\geq 0$.
 \end{lemma}

\begin{proof} Obviously, $\ker (\di_i^4)\simeq V_n$ for all $n\geq 0$. Let $n>0$ and 
$$Z=\sum_{0\leq i<j<k\leq 3}f_{ijk}\partial_{ijk}\\ \in\mathfrak{X}^3.$$ 
Suppose $f_{123}$ is a monomial in $V_n$. As in \eqref{x1x3},  $X_0(f_{123})=\deg_{x_1x_3}(f_{123})f_{123}$.  Setting $f_{012}=f_{013}=f_{023}=0$ at (\ref{d^3np}), we get that if $\deg_{x_1x_3}(f_{123})\neq 2$, then 
\[\di^3_i(Z)=\di^3_i(f_{123}\partial_{123})=\big(\deg_{x_1x_3}(f_{123})- 2\big)f_{123}\partial_{0123}.\]
 This shows that 
 $$\{f\partial_{0123}|\;f\in\ V_n, \deg_{x_1x_3}(f)\neq 2\}\subset \rm Im(\di^3_n).$$ 
We want now to show that also $\{f\partial_{0123}|\;f\in\ V_n, \deg_{x_1x_3}(f)= 2\}\subset \rm Im(\di^3_n)$. A direct check shows that given such an $f\in V_n$, one can find polynomials $f_{012}, f_{013}, f_{023}\in V_n$ satisfying 
 $$\deg_{x_1x_3}(f_{012})=\deg_{x_1x_3}(f_{023})=1, \quad  \deg_{x_1x_3}(f_{013})=2,$$
 and such that 
 $$-X_1(f_{023})+X_2(f_{013})-X_3(f_{012})=f.$$ 
 For these $f_{012}, f_{013}, f_{023}, f\in V_n$, the 3-vector field $Z=f_{012}\partial_{012}+ f_{013}\partial_{013}+ f_{023}\partial_{023}+f\partial_{123}$, satisfies $\di^3(Z)=f\partial_{01234}$. As a result,  $\bigoplus_{i>0}H^4_{i}(U_Z,\pi)=0$.
 
 For the constant coefficient case, it is enough to see from (\ref{d^3np}) that  $c\cdot\partial_{0123}=\di^3_0(c\cdot\partial_{123})$.
\end{proof}
 
 \begin{lemma}\label{H2}
 The cohomology group $H^2_i(U_Z,\pi)$ vanishes for all $i> 0$ and
 \[H^2_0(U_Z,\pi)\simeq\langle\partial_0\wedge\partial_2\rangle\simeq\mathbb{R}.\]
 \end{lemma}

\begin{proof} By the previous computation for $H^1_{>0}(U_Z,\pi)$ we have shown that $\dim(\ker(\di^1_i))\\=r_i$ and so $\dim(\mathrm{Im}(\di^1_i))=3r_i$. To prove that $H^2_i(U_Z,\pi)=0$, it is enough to prove that $\dim(\ker (\di^2_i))=3r_i$. We do this by first examining the degree $\deg_{x_1x_3}$ of the equations defining $\ker(\di^2_i)$, that is, vanishing the coefficient functions of the  3-vectors $\partial_{ijk}$ in  (\ref{d^2np}).

\noindent Let $W=\sum_{i=0<j=1}^3f_{ij}\partial_{ij}\in\ker (\di^2_i)$, and suppose that $\deg_{x_1x_3}(f_{01})=c\neq 1$. Set 
$$\deg_{x_1x_3}(f_{13})=\alpha.$$ 
Recall that $X_0(f_{13})=\alpha f_{13}$ and that for a $\deg_{x_1x_3}-$ homogeneous polynomial $f$, it is \[\deg_{x_1x_3}(X_1(f))=\deg_{x_1x_3}(X_3(f))=\deg_{x_1x_3}(f)+1\] 
by (\ref{x1x3}). Given this, and the fact that the coefficient of $\partial_{013}$ in (\ref{d^2np}) 
must satisfy the equation $(\alpha-2)f_{13}- X_1(f_{03})+X_3(f_{01})=0$, one then gets that necessarily 
\[\deg_{x_1x_3}(f_{03})=c\;\;\;\textrm{and}\;\;\;c+1=\alpha.\]

Then we turn to the coefficient of $\partial_{012}$. By the previous argument, $\deg_{x_1x_3}(X_2(f_{01}))\\=\deg_{x_1x_3}(f_{03})=c$. Suppose then that the other two terms are of different $\deg_{x_1x_3}$, so let
\begin{align}\label{c''}
\deg_{x_1x_3}(f_{12})=&\beta,
\\
\nonumber
\deg_{x_1x_3}(X_1(f_{02}))=\deg_{x_1x_3}(f_{02})+1=&\beta.
\end{align}

\noindent Vanishing the coefficient of $\partial_{023}$ one first gets the known fact 
\[\deg_{x_1x_3}(f_{01})=\deg_{x_1x_3}(X_2(f_{03}))=c.\]
 Furthermore, the degrees of the other terms of the coefficient function of $\partial_{023}$ must satisfy the equation
\[\deg_{x_1x_3}(f_{23})=\deg_{x_1x_3}(X_3(f_{02}))=\deg_{x_1x_3}(f_{02})+1.\]
By the assumption (\ref{c''}), this integer is equal to $\beta$. We thus have the following sets of equations with respect to the degree $\deg_{x_1x_3}$ of the coefficient functions $f_{ij}$ of a bivector $W\in \ker (\di^2_i):$

 \begin{equation}\label{x1x3d2first}
 \deg_{x_1x_3}(f_{23})=\deg_{x_1x_3}(f_{12})=\deg_{x_1x_3}(f_{02})+1 =\beta, 
 \end{equation}
 \[\deg_{x_1x_3}(f_{13})=\deg_{x_1x_3}(f_{01})+1=\deg_{x_1x_3}(f_{03})+1 =c+1.\]

Vanishing the coefficient of $\partial_{123}$ we then get that $c=\beta$. Equations (\ref{x1x3d2first}) then become
 \begin{equation}\label{x1x3d2}
 \deg_{x_1x_3}(f_{01})= \deg_{x_1x_3}(f_{03})=\deg_{x_1x_3}(f_{23})= \deg_{x_1x_3}(f_{12})=c, 
 \end{equation}
 \[\deg_{x_1x_3}(f_{02})=c-1,\;\deg_{x_1x_3}(f_{13})=c+1.\]
 
 Now set again the coefficients of $\partial_{012},\partial_{013},\partial_{023}$ in (\ref{d^2np}) to be equal to $0$. Solving each equation  respectively for $f_{12}, f_{13}, f_{23}$ and with the help of (\ref{x1x3d2}) and (\ref{x1x3}), we get
 
\begin{align}
f_{12}&=\frac{1}{c-1}\big[X_1(f_{02})-X_2(f_{01})-f_{03}\big],
\label{eq:f12d2}
\\
f_{13}&=\frac{1}{c-1}\big[X_1(f_{03})-X_3(f_{01})\big],
\label{eq:f13d2}
\\
f_{23}&=\frac{1}{c-1}\big[X_2(f_{03})-X_3(f_{02})-f_{01}\big].
\label{eq:f23d2}
\end{align}
Replacing $f_{12}, f_{13}, f_{23}$ in the coefficient of $\partial_{123}$ in (\ref{d^2np}), one has

\begin{align}
\nonumber
&X_1(f_{23})-X_2(f_{13})+X_3(f_{12})=\\
\nonumber
&[X_2,X_3](f_{01})-X_1(f_{01})-[X_1,X_3](f_{02})+[X_1,X_2](f_{03})-X_3(f_{03})=0,
\end{align}

\noindent where for the second equality we used that $\{x_2,x_3\}=x_1, \{x_1,x_3\}=0, \{x_1,x_2\}=x_3$ by (\ref{eq:Near-Positive}). Thus $\dim(\ker(\di^2_i))=3r_i$ and since $\dim(\mathrm{Im}(\di^1_i)=3r_i$, we get that $H^2_i(U_Z,\pi)=0$ for $i>0$.

To cover the remaining case, suppose $\deg_{x_1x_3}(f_{01})=1$. This corresponds to the constant coefficient case. Indeed, by (\ref{x1x3d2}),
\begin{equation}\label{deg2}
\deg_{x_1x_3}(f_{01})=\deg_{x_1x_3}(f_{03})=\deg_{x_1x_3}(f_{23})=\deg_{x_1x_3}(f_{12})=1,
\end{equation}
 \[\deg_{x_1x_3}(f_{02})=0,\;\;\textrm{and}\;\;  \deg_{x_1x_3}(f_{13})=2.\]

\noindent Since $W\in\ker (\di_i^2)$, the equations  satisfied by the coefficient functions $f_{ij}$ are then
\begin{align}\label{c1}
&X_1(f_{02})-X_2(f_{01})-f_{03}=0,\\
\nonumber
&X_1(f_{03})-X_3(f_{01})=0,\\
\nonumber
&X_2(f_{03})-X_3(f_{02})-f_{01}=0,\\
\nonumber
&X_1(f_{23})-X_2(f_{13})+X_3(f_{12})=0.
\end{align}

The first three equations of (\ref{c1}) together with (\ref{deg2}) imply that the vector field $f_{01}\partial_1+f_{02}\partial_2+f_{03}\partial_3$ is in $\ker (\di_i^1)$. By triviality of the first cohomology group proved in Lemma \ref{H0} (see in particular the case $\deg_{x_2x_3}(f_0)=0$ there), the function $f_{02}$, being the coefficient of $\partial_2$, is constant. Furthermore, by the similar discussion at the end of the proof of  Lemma \ref{H0}, it is $f_{01}=x_1+ax_3, f_{03}=-ax_1+x_3,\; a\in\mathbb{R}$. A direct check shows that 
\[\di^1\big( (x_0-ax_2)\partial_0\big)=f_{01}\partial_{01}+f_{03}\partial_{03},\]  i.e. $f_{01}\partial_{01}+f_{03}\partial_{03}\in \rm Im(\di^1_i)$.

 For the other part of $W$, i.e. $f_{12}\partial_{12}+f_{13}\partial_{13}+f_{23}\partial_{23}$, recall that $\deg_{x_1x_3}(f_{23})=\deg_{x_1x_3}(f_{12})=1$ and $\deg_{x_1x_3}(f_{13})=2$. Then, the coefficient $f_{123}=X_1(f_{23})-X_2(f_{13})+X_3(f_{12})$ of $\partial_{123}$ in (\ref{d^2np})  has $\deg_{x_1x_3}(f_{123})=2$. Thus the 3-vector field $f_{12}\partial_{012}+f_{13}\partial_{013}+f_{23}\partial_{023}$ is in $\rm \ker (\di^3_i)$ because of the last equation in (\ref{c1}). By Lemma \ref{H4}, it is $\dim(\mathrm{Im}(\di^3_i))=r_i$ and so $\dim(\ker(\di^3_i))=3r_i$. The space of solutions for the last equation of (\ref{c1}) is then also of dimension $3i$.

\noindent As a result, we have showed that all elements of  $\ker (\di_i^2)$ belong in  $\rm Im (\di_i^1)$, except for constant multiples of the generator $\partial_0\wedge\partial_2$. \end{proof}

 \begin{lemma}\label{H3}
 The cohomology group $H^3_i(U_Z,\pi)$ vanishes for all $i\geq 0$.
 \end{lemma}
\begin{proof} 
Let $i>0$. By the previous computation for $H^2_i(U_Z,\pi)$ in Lemma \ref{H2}, it is $\dim(\ker (\di^2_i))=3r_i$, and so $\dim(\mathrm{Im}(\di^2_i))=3r_i$. By Lemma \ref{H4}, it is $\dim(\mathrm{Im}(\di^3_i))=\dim(\ker (\di^4_i))=r_i$ and so $\dim(\ker (\di^3_i))=3r_i$. Thus $H^3_i(U_Z,\pi)=0,\;\;\forall i>0$. 

For the constant coefficient case, by (\ref{d^3np}) it is obvious that a constant coefficient 3-vector field in $\ker (\di^3_0)$ is of the form  $c_{012}\partial_{012}+c_{013}\partial_{013}+c_{023}\partial_{023}$ and this is equal to $-\di^2_0\big(c_{012}\partial_{12}+\frac{c_{013}}{2}\partial_{13}+c_{023}\partial_{23}\big)$. \end{proof}

As a conclusion,  when the coefficients belong to some $V_i$ with $i>0$ fixed,  (\ref{seq}) becomes an exact sequence.  

Let $H^{\bullet}_{\textnormal{formal}}(U_{Z},  \pi)$ be the cohomology of the cochain complex $(\mathfrak{X}^\bullet_\text{formal},\di^\bullet)$. We furthermore have the following.

\begin{proposition}\label{prop:near-pos-cohom}
Let $(M, \omega)$ be a near-symplectic 4--manifold. Consider  the tubular neighbourhood  $(U_Z, \pi)$ of the singular locus $Z_{\omega}$ equipped with the Poisson bivector \eqref{eq:Near-Positive}.  Assume $Z_{\omega}$ has only one component. Then

\begin{tabular}{lclcl}
$H^0_{\textnormal{formal}}(U_{Z},  \pi) \cong\mathbb{R} \cong \langle 1  \rangle$ 		
\nonumber
\\
$H^1_{\textnormal{formal}}(U_{Z},  \pi) \cong \mathbb{R}^{2}\cong\langle \partial_0\, ,  \,\partial_{2}  \rangle $
\nonumber
\\
$H^2_{\textnormal{formal}}(U_{Z},  \pi) \cong \mathbb{R} \cong \langle \partial_{0} \wedge \partial_{2} \rangle $			
\nonumber
\\
$H^3_{\textnormal{formal}}(U_{Z},  \pi)=  0 $
\nonumber
\\
$H^4_{\textnormal{formal}}(U_{Z},  \pi)= 0 $
\nonumber
\end{tabular}

\end{proposition}

\begin{proof} Since the operators $\di^\bullet$ are linear, it suffices to compute the cohomology spaces $H^m_i(U_Z,\pi)$ which was done in Lemmata \ref{H0} - \ref{H3} .  Due to (\ref{split}), one may then replace $V_i$ by $V_\textrm{formal}$, the algebra of formal power series equipped with (\ref{eq:Near-Positive}), since $V_{\rm formal}=\bigoplus_{i\geq0}V_i$.
\end{proof}

\subsubsection{From formal to smooth coefficients}
To use the previous Proposition for the computation of Poisson cohomology with smooth coefficients, we need the following observation.
 
\begin{lemma}\label{smoothx0x2}
The Poisson cohomology computation in Proposition \ref{prop:near-pos-cohom}  extends to the Poisson cohomology with coefficient functions that are smooth in $x_0,x_2$ and formal in $x_1,x_3$.
\end{lemma}

\begin{proof}
For completeness we present here a proof of this fact for $H^1(U_Z,\pi)$. Consider vector fields $Y=\sum_{i=0}^3f_i\partial_i\in \ker ({\di^1})$  whose coefficient functions $f_i$ are written as $f_i=\sum_kg_{ik}(x_0,x_2)x_1^{a_{ik}}x_3^{b_{ik}}$, with $g_{ik}$ smooth. The notion of $\deg_{x_1x_3}-$ degree is also valid for such coefficient functions and one can use Definition \ref{x1x3degdef} and properties in Remark \ref{x1x3props} in the same manner as for formal or polynomial functions.

\noindent To keep the notation and reasoning of the proof of Lemma \ref{H1}, let $f_0=gx_1^{k_1}x_3^{k_3}$ with $g=g(x_0,x_2)$ smooth and $\deg_{x_1x_3}(f_0)=k_1+k_3=c\neq 0$. Then using the same arguments, equations (\ref{eq:f1d1})-(\ref{eq:f3d1}) become
\begin{align}
f_1&=-\frac{\partial_0(g)}{c}x_1^{k_1+1}x_3^{k_3}+\frac{\partial_2(g)}{c}x_1^{k_1}x_3^{k_3+1},
\label{eq:f1d1s}
\\
f_2&=-\frac{g}{c}\big[k_1x_1^{k_1-1}x_3^{k_3+1}-k_3x_1^{k_1+1}x_3^{k_3-1}\big],
\label{eq:f2d1s}
\\
f_3&=-\frac{\partial_0(g)}{c}x_1^{k_1}x_3^{k_3+1}-\frac{\partial_2(g)}{c}x_1^{k_1+1}x_3^{k_3}.
\label{eq:f3d1s}
\end{align}
Writing $f_0$ as $f_0=\frac{k_1+k_3}{c}gx_1^{k_1}x_3^{k_3}$, the vector field $Y\in\ker (\di^1)$ is written as
\begin{align}\label{smooth-formal}
Y=&-\frac{\partial_0(g)}{c}x_1^{k_1}x_3^{k_3}X_0 -\frac{k_1g}{c}x_1^{k_1-1}x_3^{k_3}X_1
\\
 & -\frac{\partial_2(g)}{c}x_1^{k_1}x_3^{k_3}X_2
-\frac{k_3g}{c}x_1^{k_1}x_3^{k_3-1}X_3.
\nonumber
\end{align}
Recalling the identity 
\[\di^0(f)=\sum_{k=0}^3X_k(f)\partial_k=-\sum_{k=0}^3\partial_k(f)X_k\] from \eqref{d^0np}, equation  \eqref{smooth-formal} then reads
\[Y= \di^0(\frac{1}{c}f_0).\] Thus for Poisson cohomology classes with such coefficients, $[Y]= 0$. 

For the case $\deg_{x_1x_3}(f_0)=0$, let $Y=\sum_{i=0}^3f_i\partial_i\in \ker ({\di^1})$ with $f_0(x_0,x_2)$ being a smooth function. The corresponding arguments of Lemma \ref{H0} show in exactly the same way that $f_0$ and $f_2$ are constant. Since $\deg_{x_1x_3}(f_1)=\deg_{x_1x_3}(f_3)=1$, assume that $f_1=g_{11}x_1+g_{13}x_3,\;f_3=g_{31}x_1+g_{33}x_3$ with $g_{ik}$ being smooth functions of $x_0,x_2$. Equations \eqref{12},\eqref{23} imply that $g_{11}=g_{33}$, $g_{31}=-g_{13}$ and so the vector field $Y^{'}=f_1\partial_1+f_3\partial_3$ satisfies
\[Y^{'}=\di^0(-g_{11}x_0+g_{13}x_2),\]
and is so the image of a function of zero $\deg_{x_1x_3}-$ degree that is smooth on $x_0,x_2$.

The corresponding proofs for the fourth, second and third cohomology groups with coefficients smooth in $x_0,x_2$ and polynomial in $x_1, x_3$, follow the proofs of Lemmata \ref{H4}, \ref{H2} and \ref{H3} respectively using similar arguments. \end{proof}

\begin{definition}\label{flat x0x2}
Define a function $f\in C^\infty(U_Z)$ to be {\em flat} if all its derivatives and the function itself vanish along the singular locus $\{x_1=x_3=0\}$ of (\ref{eq:Near-Positive}). 
\end{definition}
\begin{remark}\label{smooth x0x2}
Let $\mathfrak{X}^\bullet_\textnormal{flat}(U_Z)$, $\mathfrak{X}^\bullet_\textnormal{formal}(U_Z)$ and $\mathfrak{X}^\bullet_\textnormal{smooth}(U_Z)$ be the multivector fields with flat, formal and smooth coefficients respectively.  By a consequence of Borel's theorem, the sequence

\[0\longrightarrow \big(\mathfrak{X}^k_{\textnormal{flat}}(U_Z),\di^k\big)\longrightarrow \big(\mathfrak{X}^k_{\textnormal{smooth}}(U_Z),\di^k\big)\longrightarrow\big(\mathfrak{X}^k_{\textnormal{formal}}(U_Z),\di^k\big)\longrightarrow 0\]
is exact for any $k=0, 1, 2, 3$.
\end{remark}

 \noindent We now compute the smooth Poisson cohomology using an idea of Ginzburg \cite{G96}.

\begin{proposition}\label{smooth np}
The smooth Poisson cohomology of the Poisson bivector (\ref{eq:Near-Positive}) on $U_Z$ is given in Proposition \ref{prop:near-pos-cohom}.
\end{proposition}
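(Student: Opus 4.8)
The plan is to deduce the smooth cohomology from the formal computation of Proposition~\ref{prop:near-pos-cohom} by showing that the ``flat'' part of the complex, the part invisible to Taylor expansion, is acyclic. The feature that makes this work is the linearity of \eqref{eq:Near-Positive}: the Poisson differential $\di_\pi=[\pi,\cdot]_{\mathrm{SN}}$ is homogeneous for the grading by polynomial degree in \eqref{seq}, and the Euler vector field $E=\sum_{i=0}^{3}x_i\partial_i$ is a Liouville field, $\mathcal{L}_E\pi=-\pi$, equivalently $\di_\pi E=\pi$. Hence $[\mathcal{L}_E,\di_\pi]=-\di_\pi$, so $\di_\pi$ drops the Euler weight by one. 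This is precisely the homogeneity situation in which the comparison of formal and smooth cohomology of \cite{G96} operates.

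First I would record the short exact sequence of cochain complexes
\begin{equation*}
0\longrightarrow\mathfrak{X}^{\bullet}_{\mathrm{flat}}\longrightarrow\mathfrak{X}^{\bullet}_{\mathrm{smooth}}\stackrel{T}{\longrightarrow}\mathfrak{X}^{\bullet}_{\mathrm{formal}}\longrightarrow 0,
\end{equation*}
where $T$ takes the Taylor expansion of the coefficients at the degeneracy locus $Z_\pi=\{x_1=x_3=0\}$ and $\mathfrak{X}^{\bullet}_{\mathrm{flat}}$ is its kernel, the multivector fields whose coefficients vanish to infinite order on $Z_\pi$. The map $T$ is a surjective morphism of complexes: surjectivity is Borel's lemma, and $T$ commutes with $\di_\pi$ because the latter is a local operator with polynomial coefficients. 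The long exact sequence in cohomology then reduces Proposition~\ref{smooth np} to the single assertion that $(\mathfrak{X}^{\bullet}_{\mathrm{flat}},\di_\pi)$ is acyclic.

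The acyclicity of the flat complex is the crux, and the step I expect to be hardest. On the complement of $Z_\pi$ the bivector has full rank and is therefore symplectic, so $\pi^{\sharp}$ induces an isomorphism of $(\mathfrak{X}^{\bullet},\di_\pi)$ with the de Rham complex $(\Omega^{\bullet},\di)$. Under this isomorphism I would transport the Cartan homotopy $\iota_E$ of the radial Poincar\'e lemma to a degree $-1$ operator $b$ on multivector fields, and invert $\mathcal{L}_E$ on flat cochains by the radial flow $\phi_s(x)=e^{-s}x$: flatness forces $\phi_s^{*}\alpha\to 0$ as $s\to\infty$, so $G\alpha=\int_0^{\infty}\phi_s^{*}\alpha\,\di s$ converges and satisfies $\mathcal{L}_E G=\mathrm{id}$. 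Setting $h=bG$ would then yield $\di_\pi h+h\di_\pi=\mathrm{id}$ on flat cochains, whence $H^{\bullet}(\mathfrak{X}^{\bullet}_{\mathrm{flat}},\di_\pi)=0$. The delicate point is that $b$ involves $(\pi^{\sharp})^{-1}$ and so blows up polynomially along $Z_\pi$; the argument hinges on the fact that infinite-order vanishing of flat coefficients dominates this blow-up, keeping $h\alpha$ smooth and flat. Because $Z_\pi$ is positive-dimensional, it is exactly here that the theory of Whitney functions \cite{PPT12} is needed, to control the passage between jets, smooth sections, and flat sections.

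Granting that the flat complex is acyclic, the long exact sequence gives $H^{\bullet}_{\mathrm{smooth}}\simeq H^{\bullet}_{\mathrm{formal}}$, and the generators of Proposition~\ref{prop:near-pos-cohom}, the modular class $[\partial_0]$, the class $[\partial_2]$ of the non-vanishing field in $L^{1}$, and their wedge $[\partial_0\wedge\partial_2]$, transport unchanged to the smooth setting. Everything outside the acyclicity step is then formal diagram-chasing in the long exact sequence, so the entire difficulty is concentrated in constructing the contracting homotopy on flat multivector fields and controlling it near the degeneracy locus.
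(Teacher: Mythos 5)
Your argument is correct and is essentially the paper's own proof: both reduce, via Borel's theorem and the short exact sequence of flat, smooth, and formal multivector fields, to showing that the flat complex is acyclic, and both obtain that acyclicity from the fact that $\wedge^\bullet\pi^\sharp$ identifies the flat Poisson complex with the flat de Rham complex, the polynomial blow-up of $(\pi^\sharp)^{-1}$ (division by $x_1^2+x_3^2$) being absorbed by the infinite-order vanishing of flat coefficients. The only differences are presentational: the paper exhibits the explicit inverse formulas for $\pi^\sharp_{\mathrm{flat}}$ and leaves the flat Poincar\'e lemma implicit, whereas you make the contracting homotopy explicit --- just note that on $N_Z\simeq S^1\times\mathbb{R}^3$ the full Euler field $\sum_{i=0}^3 x_i\partial_i$ is not globally defined since $x_0$ is an angle coordinate, so the radial contraction should be taken fibrewise onto the singular locus $\{x_1=x_3=0\}$, and the operator $G$ must be conjugated through $\wedge^\bullet\pi^\sharp$ together with $\iota_E$ (the na\"ive $G=\int_0^\infty\phi_s^*\,\di s$ on multivector fields fails to commute with $\di_\pi$ because $\phi_s^*\pi=e^s\pi$).
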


\begin{proof}
Due to Lemma \ref{smoothx0x2} and Remark \ref{smooth x0x2},  it suffices to show that the flat cohomology $H^\bullet_\textnormal{flat}(U_Z,\pi)$ vanishes. Extend $\pi^\sharp$ to the chain map 
$$\wedge^\bullet\pi^\sharp\colon  \big( \Omega^\bullet(U_Z), \di_{\mathrm{dR}}\big)\longrightarrow \big(\mathfrak{X}^\bullet(U_Z), \di_\pi\big)$$
and then consider the restriction to forms with flat coefficients 
$$\wedge^\bullet\pi^\sharp_\textnormal{flat}\colon \Omega^\bullet_\textnormal{flat}(U_Z)\longrightarrow \mathfrak{X}^\bullet_\textnormal{flat}(U_Z).$$

\noindent Away from the singular locus, $\pi^\sharp_\textnormal{flat}$ is an isomorphism. Indeed, 
\[
\pi^\sharp_\textnormal{flat}\left(\sum_{i=0}^3f_i\di x_i\right)=0\Leftrightarrow\begin{cases}  x_1f_1+x_3f_3=0,\\
-x_1f_0+x_3f_2=0,\\
-x_3f_1+x_1f_3=0,\\
-x_3f_0-x_1f_2=0.
  \end{cases} 
\]
Solving the first and third equation above, one gets that outside the singular locus, $f_1=f_3=0$. Similarly the second and fourth equations imply that $f_0=f_2=0$ and so $\pi^\sharp_\textnormal{flat}$ is injective. On the other hand, if $\sum_{i=0}^3g_i\partial_i$ is in the image of $\pi^\sharp_\textnormal{flat}$ then there is always a flat preimage $\sum_{i=0}^3f_i\di x_i$ with
\[f_0=\frac{x_1g_1+x_3g_3}{x_1^2+x_3^2},\;f_1=\frac{-x_1g_0+x_3g_2}{x_1^2+x_3^2},\;f_2=\frac{-x_3g_1+x_1g_3}{x_1^2+x_3^2},\;f_3=\frac{-x_3g_0-x_1g_2}{x_1^2+x_3^2}.\]
Finally, the cohomology class of $Y\in \mathfrak{X}^\bullet_\textnormal{smooth}(U_Z,\pi)$ written as a  convergent Taylor series in a neighbourhood of the singular locus is 0, if and only if each $i-$homogeneous term of the Taylor series is itself a coboundary. 
\end{proof}

\begin{remark}\label{dq}
In terms of deformation quantization, the linearity of (\ref{eq:Near-Positive}) implies that one has control on the polynomial degree of each term in the $\star-$ product corresponding to $\pi$.  Let $f,g$ be polynomials of weight $\overline{\omega}(f)$, $\overline{\omega}(g)$ respectively, and $\overline{\omega}(\pi)$ be the weight of the given Poisson structure. As shown in \cite{BP} for the more general case of weight homogeneous Poisson structures, the $k$-th term $B_k(f,g)$ in the Taylor series defining the $\star-$ product, will be of weight $\overline{\omega}(f)+\overline{\omega}(g)-k\overline{\omega}(\pi)$. Given a linear Poisson structure as  (\ref{eq:Near-Positive}) it's easy to see that for the weight vector $\overline{\omega}=(1,1,1,1)$, it is $\overline{\omega}(\pi)=-1$. However a global existence theorem for $\star-$ products over singular spaces is more complicated because of the singularities. With respect to near-symplectic manifolds, a reasonable approach would be through Fedosov's deformation quantization and the use of Whitney functions \cite{PPT12} which are implicitly used in the proof of Proposition \ref{smooth np}. 
\end{remark}

\section{Contact Structures}\label{sec:contact}
In this section we comment on the interaction between the Poisson bivector $\pi_U$ of Proposition \ref{prop:nearsymp} and a contact structure on the tubular neighbourhood of the singular locus $Z_{\omega}$ of a near-symplectic 4-manifold $(M,\omega)$.

Recall that a contact structure on a $(2n-1)$-dimensional manifold $N$ is a maximally non-integrable hyperplane distribution $\xi \subset TN$ determined by the kernel of a globally defined 1-form $\alpha$ satisfying $\alpha\wedge d\alpha^{n-1}\not= 0$. Contact structures on 3-manifolds $N^3$ are classified as {\em tight} or {\em overtwisted}.  A contact structure is called overtwisted if $(N^3, \xi)$ contains an embedding of a disk $D^2 \hookrightarrow N^3$ such that for its characteristic foliation $\Delta = T_p D^2 \cap \xi_p$: $(i)$ the boundary $\partial D^2$ is a closed leaf, and $(ii)$ there is a unique elliptic singular point in the interior of the disk $D^2$. If there is no such a disk, then the contact structure is said to be tight.  It is known that on a near-symplectic 4-manifold $(M,\omega)$ there is an overtwisted contact structure on the boundary of the tubular neighborhood $U_Z$ of the singular locus $Z_\omega$.

\begin{theorem}\cite{GK, Ho04}\label{thm:Honda-contact}
Let $(M, \omega)$ be a near-symplectic 4-manifold.  There is an overtwisted contact structure $\xi = \ker(\alpha)$ on the boundary of the tubular neighbourhood of the degeneracy locus of $\omega$,  $\partial U_Z \cong S^1 \times S^2$ such that $d\alpha = i^{*}\omega$, where $i\colon S^1\times S^2 \hookrightarrow S^1\times D^3$. 
\end{theorem}

Consider again the local model of $\omega$ on $U_Z = S^1\times D^3$ as in \eqref{omega:Darboux}.  Since $\dim(M)=4$ one has that $\omega_Z = 0$. Honda \cite[Sec. 5]{Ho04} provides the following contact form $\alpha$ defining $\xi = \ker(\alpha)$;
\begin{equation}
\alpha = \frac{1}{2}  \left( x_{1}^{2} -2 x_{2}^{2} + x_{3}^{2} \right)dx_0  + x_2 (x_1 dx_3 - x_3 dx_1).
\end{equation}
Now we look at the action of $\pi^{\sharp}$ on this contact form. Consider the Poisson bivector $\pi_U = \eta + \ast \eta$ on $U_Z$ as in \eqref{pi:U} and \eqref{pi:near-pos}, and the Hamiltonian vector fields \eqref{eq:pisharp-dx0-nearpos}-\eqref{eq:pisharp-dx3-nearpos}. Then
$$
\pi^{\sharp}_U(\alpha) =   \frac{1}{2}  \left( x_{1}^{2} -2 x_{2}^{2} + x_{3}^{2} \right) X_0 - x_2 (x_1^2 + x_3^2) \partial_2,
$$
and after a change of coordinates $x_0 = x_0, x_1 = r \cos(\phi), x_2 = z,  x_3 = r \sin (\phi)$, the previous expression becomes
$$
\pi^{\sharp}_U(\alpha) =     \left( \frac{1}{2} r^2 - z^{2} \right) X_0- \left( r^2 \cdot z\right)  \partial_z.
$$
%
This vector field is clearly zero on $Z_{\omega} = S^1\times \lbrace \mathbf{0} \rbrace$. As it moves to the boundary $S^1 \times S^2$, the action of $\pi^{\sharp}$ on the contact form is a combination of the Hamiltonian vector field $X_0$ and the Poisson vector field $\partial_z$.  

In \cite{Ho04} the author also provides the Reeb vector field of the contact structure, i.e. the unique vector field $Y$ such that $Y\in \ker(d\alpha)$ and $\alpha(Y) =1$. Up to a multiple, the Reeb vector field is given by  
$$
Y = \frac{1}{f} \left(x_1^2 - 2x_2^2 + x_3^2 \right) \partial_0 + 3x_2 \left(-x_3 \partial_1 + x_1 \partial_3\right)
$$
where $f = -\frac{1}{2}\left[ ( x_1^2 + x_3^2 ) (x_1^2 - 2x_2^2 + x_3^2) + 4x_2^4 \right]$. Since the modular vector field $V_{\mathrm{mod}}$ of the Poisson structure $\pi_U$ is $2\partial_0$, the Reeb vector field can be expressed using the modular vector field and a Hamiltonian vector field
\begin{equation}\label{Reeb}
Y = \frac{1}{2f} \left(x_1^2 - 2x_2^2 + x_3^2 \right) V_{\mathrm{mod}} + \left(3x_2\right) X_2.
\end{equation}

\noindent Denote by $\mathrm{pt}_N:= \lbrace(0, 1, 0) \rbrace, \mathrm{pt}_S:= \lbrace(0, -1, 0) \rbrace$ the north and south poles of $S^2$.  The closed orbits of the Reeb vector field are  
\begin{equation}\label{orbits}
\mathcal{O}_1=S^{1}\times \lbrace \mathrm{pt}_N \rbrace \quad, \quad \mathcal{O}_2=S^{1}\times \lbrace \mathrm{pt}_S \rbrace \quad , \quad \mathcal{O}_3=S^{1}\times \lbrace(x_1, 0, x_3) \rbrace
\end{equation}
with $x_1^2 + x_3^2 = 1$ and $x_1, x_3$ fixed.  Hence, along its closed orbits, $Y$ is a constant multiple of the modular vector field $V_{\mathrm{mod}}$ as 
\begin{equation}
Y=\frac{1}{4}V_{\mathrm{mod}}\;\;\text{on}\;\;\mathcal{O}_1 \quad, \quad Y=\frac{1}{4}V_{\mathrm{mod}}\;\;\text{on}\;\;\mathcal{O}_2 \quad, \quad Y=-\frac{1}{2}V_{\mathrm{mod}}\;\text{on}\;\;\mathcal{O}_3
\end{equation}
 respectively. By Proposition \ref{prop:near-pos-cohom} one can summarize the previous observations in the following corollary. 

\begin{corollary}
Let $(M,\omega)$ be a near-symplectic 4--manifold and $\pi$ the Poisson structure on the tubular neighborhood of the zero locus. Along closed orbits, the Reeb vector field of the contact structure $(\partial U_Z, \xi)$ (as in Theorem \ref{thm:Honda-contact}) is in the Poisson cohomology class of the modular vector field $[\partial_0] \in H_{\pi}^1(M)$.
\end{corollary}

In higher dimensions, the situation is unknown. On one hand, it is not clear if there is a contact structure in some submanifold of a near-symplectic manifold. On the other, the Poisson cohomology for $(M, \pi)$ would require other techniques for its computation.

\noindent {\bf Acknowledgements} 
We warmly thank Pedro Frejlich and Ralph Klaasse for their comments and feedback on drafts of this work. We are also very grateful to A{\"i}ssa Wade for fruitful discussions and interest in this work. Our thanks extend also to Viktor Fromm, Luis Garc{\'i}a-Naranjo, Alexei Novikov, Tim Perutz and Pablo Su{\'a}rez-Serrato. 

\noindent{\bf Data Availability}
Data sharing is not applicable to this article as no new data were created or analyzed in this study.


\end{document}